\documentclass[11pt,reqno]{amsart}

\usepackage[utf8]{inputenc}
\usepackage[T1]{fontenc}
\usepackage{amsmath,amssymb,amsthm}
\usepackage{mathtools}
\usepackage[margin=1.05in]{geometry}
\usepackage{booktabs}
\usepackage{tabularx}
\usepackage{microtype}
\microtypesetup{expansion=false}
\usepackage{enumitem}
\usepackage{tikz}
\usepackage{xcolor}
\usepackage[colorlinks=true,linkcolor=blue!45!black,citecolor=blue!45!black,
  urlcolor=blue!45!black]{hyperref}

\theoremstyle{plain}
\newtheorem{theorem}{Theorem}[section]
\newtheorem{lemma}[theorem]{Lemma}
\newtheorem{proposition}[theorem]{Proposition}
\newtheorem{corollary}[theorem]{Corollary}

\theoremstyle{definition}
\newtheorem{definition}[theorem]{Definition}

\newtheorem{example}[theorem]{Example}

\theoremstyle{remark}
\newtheorem{remark}[theorem]{Remark}

\newcommand{\N}{\mathbb N}

\newcommand{\Z}{\mathbb Z}
\newcommand{\R}{\mathbb R}

\newcommand{\Qcube}{Q_3}
\newcommand{\W}{\mathcal W}
\newcommand{\A}{\mathcal A}
\newcommand{\U}{\mathcal U}
\newcommand{\occ}{\operatorname{occ}}

\newcommand{\ab}{\operatorname{ab}}

\newcommand{\gen}[1]{x_{#1}}
\newcolumntype{Y}{>{\raggedright\arraybackslash}X}
\begin{document}
\raggedbottom

\title[From One Generator to Loop Order on the Three-Cube]{From One Generator to Loop Order on the Three-Dimensional Cube}

\author{Jonathan Washburn}
\address{Recognition Physics Institute, Austin, Texas, USA}
\email{jon@recognitionphysics.org}

\author{Milan Zlatanovi\'c}
\address{Department of Mathematics, Faculty of Science and Mathematics,
University of Ni\v{s}, Vi\v{s}egradska 33, 18000 Ni\v{s}, Serbia}
\email{zlatmilan@yahoo.com}

\date{}

\begin{abstract}
We study the monoid \(\W\) of based closed walks on the \(1\)-skeleton
of the three-dimensional cube \(Q_3\). The fundamental group of this
graph is the free group \(F_5\). We prove that every additive one-step reading, given by a sum of edge weights in an abelian group, factors through the
directed transition counts, and we exhibit two closed walks with equal
transition counts and different reduced loop words. Hence commutative
aggregation does not determine the reduced loop word.

The reduced loop word gives a proper noncommutative recognition
quotient. We also determine the shortest closed walk with trivial
abelianization but nontrivial degree-two commutator information. Its
minimum length in the cube edge metric is \(14\).  

We also obtain a separation between finite and unbounded memory. A
two-state reading separates an order pair, while for every \(k\geq1\),
there is an explicit factorial pair which no \(k\)-state reading separates.
An unbounded stack recovers the reduced loop word on every walk.

For integer-valued functions on the vertex set, potential readings vanish on closed walks, while occupation binding is not
rectangular. More generally, every occupation-based constraint is
rectangular on a class of histories if and only if the occupation
vector is constant on that class.

Finally, a declared quarter-turn quaternion clock gives a second proper
order-sensitive congruence, incomparable with the reduced-word
quotient. The obstruction to commutative recovery, and the minimum
\(14\), remain valid on every hypercube \(Q_n\), \(n\geq3\).
\\

\noindent{{\bf Keywords:} based closed walks, reduced loop words, recognition quotients,
finite-state memory, hypercubes.}
\smallskip

\noindent{{\bf AMS class. 2020:} 20F14, 20F34, 20M35}
\end{abstract}

\maketitle

\section{Introduction}\label{sec:intro}

In~\cite{delta-paper} we studied the structure generated from an empty record \(0\) by a single repeatable step \(t\mapsto St\), and showed that the orbit generated is a free additive monoid on one generator, isomorphic to \((\N,+,0)\). A recognizer of a monoid \(M\) is a monoid
homomorphism
$$
r:M\longrightarrow N,
$$
where the monoid \(N\) need not be finite. This is more general than
the finite monoid convention used in~\cite{eilenberg}. The relation
$$
x\sim_r y
\quad\Longleftrightarrow\quad
r(x)=r(y)
$$
is the kernel congruence of \(r\), and \(M/{\sim_r}\) is its recognition
quotient.

Under excluded middle, in \cite{delta-paper}, we classified the recognition
quotients of this monogenic monoid: every recognizer is either injective or
has a finite index-period quotient \(M(i,p)\). Since the carrier is generated
by \(S0\), every monoid recognizer is determined by the image of this one
generator, so there is no independent order information. We now consider a
carrier (the monoid whose elements are to be recognized)  with several independent loops, where order can matter.

We take as carrier the monoid \(\W\) of based closed walks (we also call them histories) on the
\(1\)-skeleton of the three-dimensional cube \(Q_3\). The restriction to
based closed walks is a convention, not a consequence of
recognition. We fix a spanning tree 
and
obtain a homomorphism
$$
\rho:\W\longrightarrow F_5,
$$ where \(F_5\) is the free group of rank five, 
which assigns to each closed walk its reduced loop word. Thus \(Q_3\) is
the first hypercube whose reduced loop words can be noncommutative
(Proposition~\ref{prop:firstcube}). We also use \(Q_3\) because its
edges are divided into three coordinate directions. The quarter-turn clock of Theorem~\ref{thm:quarter-clock}
uses exactly these three directions. The tetrahedron \(K_4\) has four vertices and six edges, hence cycle
rank three and \(\pi_1(K_4)\cong F_3\). The graph \(K_4\) is smaller and has
a nonabelian fundamental group, but it does not have this three-coordinate
product structure.
\begin{definition}\label{def:clock}
A sequential reading consists of a state set \(S\), an initial state
\(s_0\in S\), update maps \(T_e:S\to S\) for directed edges \(e\), a set
\(Y\), and an output map \(q:S\to Y\). On a walk whose directed edge sequence is \(e_1,\ldots,e_n\), its value is
$$
q\bigl(T_{e_n}\circ\cdots\circ T_{e_1}(s_0)\bigr).
$$
We call such a sequential reading a \emph{clock}.  The state set may be finite or
infinite. For an additive reading, we take \(S\) to be an abelian group and
$$
T_e(s)=s+g(e).
$$
\end{definition}

A sequential reading need not be a monoid recognizer. Here, we use the term
{\it recognition quotient} only for monoid recognizers.

Let
$$
\ab:F_5\longrightarrow \Z^5
$$
be the abelianization map, and put
\begin{equation}\label{11}
a=\ab\circ\rho.
\end{equation}

We construct two closed walks with the same directed transition counts.
Every additive one-step reading takes the same value on these walks,
while their reduced loop words are different. 
So, directed transition counts, and more generally any commutative
aggregation, do not determine the reduced loop word (Section~\ref{sec:blindness}).

We next compare finite-state readings with unbounded memory. Let
\(
B=(000;1212).
\) For every
\(k\geq1\) we construct two closed walks
$$
B^{k!},\qquad B^{2\cdot k!},
$$
such that every \(k\)-state reading \(F\) (every sequential reading with at most \(k\) states) satisfies 
\[
F\bigl(B^{k!}\bigr)=F\bigl(B^{2\cdot k!}\bigr),
\]
while
\[
\rho\bigl(B^{k!}\bigr)\neq\rho\bigl(B^{2\cdot k!}\bigr).
\]
This pair is already separated by the abelian reading \(a\). What fails
is any fixed finite-state bound (Section~\ref{sec:hierarchy}). An unbounded stack, which stores the current reduced word and cancels adjacent inverse pairs, performs free reduction and recovers
\(\rho(w)\) for every \(w\in\W\).

The induced congruences satisfy
$$
\Delta_{\W}
\subsetneq
\sim_{\rho}
\subsetneq
\sim_{a},
$$
where \(\Delta_{\W}\) is equality on \(\W\). Thus \(\rho\) is not
injective, but its kernel is strictly finer than that of \(a\). In
particular, \(\rho\) defines a proper noncommutative recognition
quotient of \(\W\) (Section~\ref{sec:congruence}).

We denote by \(|w|\)  the number of steps of a walk \(w\). The shortest based closed walk with trivial abelianization but
nontrivial degree-two commutator information has length \(14\)
(Section~\ref{sec:fourteen}). More precisely,
$$
\min\bigl\{
|w|:\rho(w)\in\gamma_2(F_5)\setminus\gamma_3(F_5)
\bigr\}=14.
$$
Here \(\gamma_1(F_5)=F_5\) and
\(\gamma_{n+1}(F_5)=[\gamma_n(F_5),F_5]\), so
\(\gamma_2(F_5)=[F_5,F_5]\). Thus \(\rho(w)\in\gamma_2(F_5)\) means that its abelianization is trivial, while
\(\rho(w)\notin\gamma_3(F_5)\) means that its class in
\(\gamma_2(F_5)/\gamma_3(F_5)\) is nontrivial.

A declared quarter-turn transport, defined by assigning a quaternion
factor to each of the three coordinate directions, gives a second proper
order-sensitive congruence (see Theorem~\ref{thm:quarter-clock}). This
congruence is incomparable with \(\sim_{\rho}\) (see
Proposition~\ref{prop:clock-incomparable}).

\section{The three-dimensional cube}\label{sec:carrier}

 We now fix the vertices of the three-dimensional cube $Q_3$, the spanning tree, and the five generators associated with the cotree edges. Let
$$
V=\{0,1\}^{3}
$$
be the vertex set of \(\Qcube\). We number the coordinates
by \(0,1,2\). A step in direction \(i\) changes the \(i\)-th coordinate
from \(0\) to \(1\), or from \(1\) to \(0\). We fix the basepoint
\(o=000\) and write a walk as
$$
w=(o;a_1a_2\cdots a_n),
\qquad a_k\in\{0,1,2\}.
$$
Let \(v_0=o,v_1,\ldots,v_n\) be the associated vertex sequence, where
\(v_k\) is obtained from \(v_{k-1}\) by changing coordinate \(a_k\).
The walk \(w\) is closed when \(v_n=o\). We write
\[
e_k=(v_{k-1},v_k)
\]
for the \(k\)-th directed edge. Thus the same walk can also be written
\(w=e_1\cdots e_n\) when its directed edges are used.
{\begin{lemma}\label{lem:closure}
A walk \(w=(o;a_1\cdots a_n)\) is closed if and only if each direction
\(0,1,2\) occurs an even number of times among
\(a_1,\ldots,a_n\).
\end{lemma}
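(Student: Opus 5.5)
The plan is to track the vertex sequence coordinatewise in the group \((\Z/2)^3\). Identify \(V=\{0,1\}^3\) with \((\Z/2)^3\), and let \(\epsilon_i\) denote the \(i\)-th standard basis vector, for \(i\in\{0,1,2\}\). A step in direction \(a_k\) changes coordinate \(a_k\) and leaves the other two fixed, so
\[
v_k=v_{k-1}+\epsilon_{a_k}\qquad(1\le k\le n).
\]
A straightforward induction on \(k\) then gives the closed formula
\[
v_k=o+\sum_{j=1}^{k}\epsilon_{a_j}.
\]
Since \(o=000\) is the zero vector, this reads \(v_n=\sum_{j=1}^{n}\epsilon_{a_j}\).

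Next I collect terms by direction. Let \(c_i\) be the number of indices \(j\) with \(a_j=i\). Then
\[
v_n=\sum_{i=0}^{2} c_i\,\epsilon_i ,
\]
so the \(i\)-th coordinate of \(v_n\) is \(c_i \bmod 2\). The walk is closed exactly when \(v_n=o=0\). This happens if and only if each coordinate vanishes, that is, each \(c_i\) is even. That proves both directions of the lemma at once.

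There is no real obstacle in this argument. The only point that needs care is that the flip of coordinate \(a_k\) is correctly modeled by adding \(\epsilon_{a_k}\) in \((\Z/2)^3\). This holds because flipping a bit means adding \(1\) modulo \(2\) in that coordinate while the other coordinates stay fixed. Once this is granted, the equivalence follows from the independence of the three coordinates.
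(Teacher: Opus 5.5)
Your proof is correct and takes essentially the paper's approach. The paper also argues that a step in direction \(i\) flips only the \(i\)-th coordinate, so that coordinate returns to its initial value exactly when direction \(i\) is used an even number of times; your \((\Z/2)^3\) formulation just makes this coordinatewise parity count explicit.
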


\begin{proof}
Each step in direction \(i\) changes only the \(i\)-th coordinate. Hence
the \(i\)-th coordinate returns to its initial value if and only if
direction \(i\) is used an even number of times. Therefore \(v_n=o\) if
and only if each of the three directions occurs an even number of times.
\end{proof}}

Two walks are equal when their step sequences coincide. Concatenation of
based closed walks is  based and closed, and the empty walk
\(1=(o;\,)\) is a neutral element. Thus \(\W\) is a monoid. By
Lemma~\ref{lem:closure} it is the submonoid of the free monoid on
\(\{0,1,2\}\) consisting of the words in which each letter occurs an even
number of times.

We fix the  Gray-code spanning tree (Figure 1)
$$
000,\ 001,\ 011,\ 010,\ 110,\ 111,\ 101,\ 100,
$$
where successive vertices differ in one coordinate. The remaining five
edges are the cotree edges
$$
(0,2),\quad (0,4),\quad (1,5),\quad (3,7),\quad (4,6),
$$
where a vertex is written \(b_2b_1b_0\), with \(b_i\) the \(i\)-th
coordinate, and is read as the integer \(4b_2+2b_1+b_0\).   Thus
\(000,\ldots,111\) become \(0,\ldots,7\), and a step in direction \(i\)
changes this integer by \(\pm2^{i}\). We orient each cotree edge from the
smaller integer to the larger and denote them by
\(\gen{1},\ldots,\gen{5}\).

For a based closed walk \(w\), a forward crossing of the \(i\)-th cotree edge contributes \(x_i\), a backward crossing contributes \(x_i^{-1}\), and a tree edge contributes no letter. This gives a word \(\ell(w)\). After free
reduction we obtain the homomorphism
$$
\rho:\W\longrightarrow F_5.
$$
 Since
$$
\ell(uv)=\ell(u)\ell(v),
$$
we have
$$
\rho(uv)=\rho(u)\rho(v),\qquad \rho(1)=1.
$$
Hence \(\rho\) is a monoid homomorphism. This is the spanning tree
presentation of the fundamental group of a connected graph (see e.g.
\cite{biggs,hatcher,serre-trees}). In the sense of
Definition~\ref{def:clock}, free reduction is a sequential reading whose state is the reduced word.

{One transcription fixes the convention. The square walk
\(
(000;0101)
\)
passes through the vertices
\(
0,1,3,2,0.
\)
Its first three steps are tree edges and contribute nothing. The last step
crosses the cotree edge \((0,2)\) from the larger integer to the smaller, so it
contributes \(\gen{1}^{-1}\). Hence its reduced loop word is
\(
\gen{1}^{-1}.
\)}

\begin{figure}[t]\label{cube}
\centering
\begin{tikzpicture}[scale=1.15, baseline]
  \tikzset{vtx/.style={circle,draw,fill=white,inner sep=1.1pt,font=\scriptsize}}
  \coordinate (v000) at (0,0);
  \coordinate (v001) at (0,1.6);
  \coordinate (v010) at (1.5,0.55);
  \coordinate (v011) at (1.5,2.15);
  \coordinate (v100) at (2.4,-0.85);
  \coordinate (v101) at (2.4,0.75);
  \coordinate (v110) at (3.9,-0.3);
  \coordinate (v111) at (3.9,1.3);
  \draw[thick] (v000)--(v001)--(v011)--(v010)--(v110)--(v111)--(v101)--(v100);
  \draw[thick,dashed,blue!60!black] (v000)--(v010) node[midway,left,font=\scriptsize]{\(\gen{1}\)};
  \draw[thick,dashed,blue!60!black] (v000)--(v100) node[midway,below,font=\scriptsize]{\(\gen{2}\)};
  \draw[thick,dashed,blue!60!black] (v001)--(v101) node[midway,above,font=\scriptsize]{\(\gen{3}\)};
  \draw[thick,dashed,blue!60!black] (v011)--(v111) node[midway,above,font=\scriptsize]{\(\gen{4}\)};
  \draw[thick,dashed,blue!60!black] (v100)--(v110) node[midway,below,font=\scriptsize]{\(\gen{5}\)};
  \foreach \p/\lab in {
    v000/000,v001/001,v010/010,v011/011,
    v100/100,v101/101,v110/110,v111/111}
    \node[vtx] at (\p) {\lab};
\end{tikzpicture}
\caption{The \(1\)-skeleton of \(Q_3\).  Solid edges form the Gray-code spanning tree. Dashed edges are the five cotree edges, labeled by
\(\gen{1},\ldots,\gen{5}\). }
\end{figure}

 {\begin{proposition}\label{prop:firstcube}
The cycle ranks of \(Q_0,Q_1,Q_2,Q_3\) are
$$
0,\ 0,\ 1,\ 5.
$$
Hence
$$
\pi_1(Q_2)\cong\Z,
\qquad
\pi_1(Q_3)\cong F_5.
$$
Thus \(Q_3\) is the first hypercube whose reduced loop words can be
noncommutative.
\end{proposition}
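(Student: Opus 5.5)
We compute the cycle rank \(|E|-|V|+1\) of each \(Q_n\) directly. The hypercube \(Q_n\) has \(2^n\) vertices, and each vertex has degree \(n\), so \(|E(Q_n)|=n2^{n-1}\). Since \(Q_n\) is connected (any vertex reaches \(0\cdots0\) by flipping its nonzero coordinates one at a time), its cycle rank is
\[
\beta(Q_n)=n2^{n-1}-2^n+1 .
\]
For \(n=0,1,2,3\) this gives \(0-1+1=0\), \(1-2+1=0\), \(4-4+1=1\), and \(12-8+1=5\). For \(Q_3\) this agrees with the explicit count in Section~\ref{sec:carrier}: the Gray-code tree has \(7\) edges, and the cube has \(12\) edges, leaving exactly the five cotree edges \(\gen{1},\ldots,\gen{5}\). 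For \(Q_0\), the single-vertex convention with \(n2^{n-1}=0\) needs a one-line remark.

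Next I would invoke the standard fact already cited in Section~\ref{sec:carrier} (\cite{hatcher,serre-trees}). A connected graph deformation retracts, after collapsing a spanning tree, onto a wedge of \(\beta\) circles. Hence its fundamental group is free of rank \(\beta\), with free basis given by the oriented cotree edges. This yields \(\pi_1(Q_0)\) and \(\pi_1(Q_1)\) trivial, \(\pi_1(Q_2)\cong F_1\cong\Z\), and \(\pi_1(Q_3)\cong F_5\). In the case of \(Q_3\), this is exactly the homomorphism \(\rho\) already constructed.

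For the final sentence, I would argue as follows. For \(n\le2\), the groups \(1\) and \(\Z\) are abelian, so every reduced loop word commutes with every other and the order of loops is invisible. For \(n=3\), \(F_5\) is nonabelian: \(\gen{1}\gen{2}\ne\gen{2}\gen{1}\), since both are distinct reduced words. To make this concrete on walks, I would note that \(\rho\) is surjective onto \(F_5\), because each generator \(\gen{i}\) is realized by the closed walk that goes along the tree to one endpoint, crosses the cotree edge, and returns along the tree. Hence two walks realizing \(\gen{1}\gen{2}\) and \(\gen{2}\gen{1}\) exist.

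The only step needing care is this surjectivity of \(\rho\), together with the claim that it is the standard isomorphism rather than an arbitrary homomorphism. I would handle this by citing the spanning-tree presentation rather than reproving it. The surjectivity check via tree paths is immediate.
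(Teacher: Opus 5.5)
Your proposal is correct and matches the paper's proof. Like the paper, you compute \(n2^{n-1}-2^n+1\) for \(n=0,1,2,3\), use the standard fact that a connected graph has free fundamental group of rank equal to its cycle rank, and conclude from \(F_1\cong\Z\) being abelian while \(F_5\) is not. Your additional steps (the Gray-tree cross-check and the surjectivity of \(\rho\), which the paper proves separately in Corollary~\ref{cor:quotients}) are sound but not needed. One small wording fix: collapsing a spanning tree is a homotopy equivalence onto a wedge of circles, not a deformation retraction onto it.
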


\begin{proof}
For a connected graph, the cycle rank is
$$
|E|-|V|+1.
$$
The graph \(Q_0\) has one vertex and no edges. For \(d\geq1\), the
\(d\)-cube has \(2^d\) vertices and \(d2^{d-1}\) edges. Hence its cycle
rank is
$$
d2^{d-1}-2^d+1,
$$
which gives the corresponding values.

The fundamental group of a connected graph is free of rank equal to its
cycle rank \cite{biggs,hatcher,serre-trees}. Thus
$$
\pi_1(Q_2)\cong F_1\cong\Z,
\qquad
\pi_1(Q_3)\cong F_5.
$$
Since \(F_1\) is abelian and \(F_5\) is nonabelian, the last statement
follows.
\end{proof}}

\subsection{\texorpdfstring{The classical \(7+5\) decomposition}
{The classical 7+5 decomposition}}

We choose an orientation of the twelve edges and let
\(C_1(Q_3;\R)\) denote the real edge space. Using the standard
Euclidean identification of \(C^1(Q_3;\R)\) with \(C_1(Q_3;\R)\),
let
$$
d:C^0(Q_3;\R)\longrightarrow C^1(Q_3;\R)
$$
be the coboundary map, so that for a vertex potential \(\phi\),
$$
(d\phi)(u,v)=\phi(v)-\phi(u)
$$
on an oriented edge \(u\to v\). Since \(Q_3\) is connected, \(\ker d\) consists of the constant
potentials. Hence
$$
\dim(\operatorname{im} d)=8-1=7.
$$
Let
$$
\partial:C_1(Q_3;\R)\longrightarrow C_0(Q_3;\R)
$$
be the boundary map. Its kernel 
is the cycle space. Hence
$$
\dim(\ker\partial)=12-8+1=5.
$$
For \(\phi\in C^0(Q_3;\R)\) and \(z\in\ker\partial\), we have
\[\langle d\phi,z\rangle=\langle\phi,\partial z\rangle=0.\] 
Thus \(\operatorname{im}d\) and \(\ker\partial\) are orthogonal, and
their dimensions add to twelve.

Accordingly, the edge space admits the orthogonal decomposition
$$
C_1(\Qcube;\R)
=
\operatorname{im} d
\oplus
\ker\partial,
\qquad
12=7+5.
$$
The map \(a=\ab\circ\rho\) given by \eqref{11} records the five cycle coordinates, i.e. the exponent sums of the five generators.  {
More precisely, let \(C_i\) be the fundamental cycle associated with the
\(i\)-th cotree edge, oriented so that this cotree edge has coefficient
\(+1\). Then \(C_1,\ldots,C_5\) form a basis of the integral cycle space.
Let \(z\) be an integral cycle and let \(b_i\) be its coefficient
on the \(i\)-th cotree edge. Then
$$
z-\sum_{i=1}^{5} b_i C_i
$$
is a cycle supported only on the spanning tree. A tree has no nonzero
cycles, so this difference is zero. Thus \(C_1,\ldots,C_5\) span the
integral cycle space. They are  independent, since \(C_i\) contains
the \(i\)-th cotree edge with coefficient \(+1\), while the other
\(C_j\) have coefficient zero on this edge.

For every based closed walk \(w\), its oriented \(1\)-chain satisfies

$$
z(w)=\sum_{i=1}^{5}a_i(w)C_i.
$$

In particular, \(a(w)=0\) if and only if \(z(w)=0\).

\section{Additive readings}\label{sec:blindness}

Let \(\A=\Z^V\) be the group of integer functions on the vertex set \(V\). For \(\phi\in\A\) and a walk \(w=(v_0,\ldots,v_n)\), define
\begin{equation}\label{eq:potential-telescope}
R_\phi(w)
=
\sum_{k=1}^{n}
\bigl(\phi(v_k)-\phi(v_{k-1})\bigr).
\end{equation}
This is the pairing of the oriented \(1\)-chain defined by the walk with the exact cochain \(d\phi\).
\begin{lemma}\label{thm:telescope}
For every \(\phi\in\A\) and every walk \(w=(v_0,\ldots,v_n)\), it holds
$$
R_\phi(w)=\phi(v_n)-\phi(v_0).
$$
In particular, \(R_\phi(w)=0\) for every closed walk.
\end{lemma}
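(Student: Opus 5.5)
The identity is a telescoping sum, and I would prove it by induction on the length \(n\) of the walk so that the cancellation is explicit.

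\emph{Base case.} For \(n=0\) the walk is the empty walk at \(v_0\). The sum in \eqref{eq:potential-telescope} is empty, so \(R_\phi(w)=0=\phi(v_0)-\phi(v_0)\).

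\emph{Inductive step.} Suppose the formula holds for walks of length \(n-1\). Let \(w'=(v_0,\ldots,v_{n-1})\) be the walk obtained by deleting the last step of \(w\). Splitting off the last summand of \eqref{eq:potential-telescope} gives
\[
R_\phi(w)=R_\phi(w')+\bigl(\phi(v_n)-\phi(v_{n-1})\bigr).
\]
By the inductive hypothesis, \(R_\phi(w')=\phi(v_{n-1})-\phi(v_0)\). Substituting this, the two \(\phi(v_{n-1})\) terms cancel and
\[
R_\phi(w)=\phi(v_n)-\phi(v_0).
\]
Equivalently, one can reindex the sum: it is \(\sum_{k=1}^{n}\phi(v_k)-\sum_{k=0}^{n-1}\phi(v_k)\), and every intermediate term cancels.

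\emph{Closed walks.} For a closed walk \(v_n=v_0\), so the right-hand side is \(\phi(v_0)-\phi(v_0)=0\). In particular this holds for every \(w\in\W\), where \(v_0=v_n=o\).

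As a consistency check, this agrees with the chain-level picture of Section~\ref{sec:carrier}. \(R_\phi(w)\) is the pairing \(\langle d\phi,z(w)\rangle=\langle\phi,\partial z(w)\rangle\), and \(\partial z(w)=v_n-v_0\) as a \(0\)-chain. That route works over \(\R\), but the statement concerns \(\phi\in\Z^V\). The direct telescoping argument above stays in \(\Z\) and needs nothing about the orthogonal decomposition, so I would use it as the proof. There is no real obstacle; the only point requiring care is the empty walk, which the base case handles.
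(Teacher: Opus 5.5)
Your proof is correct and is essentially the paper's argument. The paper simply expands the sum and notes that all intermediate terms cancel; your induction, or equivalently your reindexing remark, spells out that same telescoping, including the empty-walk case.
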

\begin{proof}
If we expand the sum, all intermediate terms cancel, so
$$
\sum_{k=1}^{n}
\bigl(\phi(v_k)-\phi(v_{k-1})\bigr)
=
\phi(v_n)-\phi(v_0).
$$
\end{proof}
{Thus an exact potential reading depends only on the endpoints of the walk. On \(\W\), it is therefore the trivial recognizer, i.e. the zero homomorphism.

The potential reading \eqref{eq:potential-telescope} is a special additive one-step reading, with the weight of a directed edge \(p\to q\) given by
$$
g(p,q)=\phi(q)-\phi(p).
$$
We now allow arbitrary weights on directed steps. Let \(G\) be an abelian group and let
$$
g:V\times V\longrightarrow G
$$
be a weight function. For a walk \(w=(v_0,\ldots,v_n)\), define
$$
R_g(w)
=
\sum_{k=1}^{n} g(v_{k-1},v_k).
$$
We call \(R_g\) {\it an additive one-step reading}. Thus \(R_g(w)\) is the
accumulated weight along the walk. We call \(g\) {\it antisymmetric} if
$$
g(q,p)=-g(p,q)
$$
for every edge \(\{p,q\}\).   Then opposite traversals cancel, so \(R_g\) depends only on the oriented
\(1\)-chain:
\(
R_g(w)=\langle g,z(w)\rangle,
\)
where \(g\) is a \(1\)-cochain.
 
{For \(p,q\in V\), define the directed transition count by}

$$
\tau_w(p,q)
=
\#\{k:(v_{k-1},v_k)=(p,q)\}.
$$
}

{\begin{proposition}\label{thm:additive}
Every additive one-step reading factors through the directed transition
counts:
$$
R_g(w)
=
\sum_{(p,q)\in V\times V}
\tau_w(p,q)\,g(p,q).
$$
Consequently, if 
$
\tau_u=\tau_v$ then $R_g(u)=R_g(v)$
for every abelian group \(G\) and every weight function \(g\).
\end{proposition}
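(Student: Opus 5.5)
The plan is to regroup the defining sum of \(R_g(w)\) by the value of each step. Write \(w=(v_0,\ldots,v_n)\). Then
\[
R_g(w)=\sum_{k=1}^{n} g(v_{k-1},v_k)
\]
is a finite sum of elements of the abelian group \(G\). I would partition the index set \(\{1,\ldots,n\}\) into the fibres
\[
K_{p,q}=\{k:(v_{k-1},v_k)=(p,q)\},\qquad (p,q)\in V\times V.
\]
These fibres are pairwise disjoint, and their union is all of \(\{1,\ldots,n\}\), since each \(k\) determines exactly one pair \((v_{k-1},v_k)\).

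Because \(G\) is abelian, a finite sum may be reordered and regrouped freely. This gives
\[
R_g(w)=\sum_{(p,q)\in V\times V}\ \sum_{k\in K_{p,q}} g(p,q).
\]
On each fibre the summand is constant, equal to \(g(p,q)\). So the inner sum is the integer multiple \(|K_{p,q}|\,g(p,q)=\tau_w(p,q)\,g(p,q)\), by the definition of \(\tau_w\). Since \(V\times V\) is finite (it has \(64\) elements), the outer sum is a finite sum. Pairs that are not edges, or are not traversed, have \(\tau_w(p,q)=0\) and contribute nothing. This proves the displayed formula.

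The consequence follows at once. The right-hand side depends on \(w\) only through the function \(\tau_w\). Hence \(\tau_u=\tau_v\) implies \(R_g(u)=R_g(v)\), for every abelian group \(G\) and every weight function \(g\).

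The only step needing care is the regrouping, which is exactly where commutativity of \(G\) is used. It is valid for any abelian group, including ones with torsion, since only finite sums and \(\Z\)-multiples appear. The empty walk gives \(0=0\) trivially, so no separate case is needed.
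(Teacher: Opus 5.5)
Your proposal is correct and follows the paper's proof. Both partition the steps of \(w\) by the directed pair \((p,q)\) they traverse, and collect the \(\tau_w(p,q)\) equal terms \(g(p,q)\) using commutativity of \(G\); you simply make the fibre decomposition explicit.
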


\begin{proof}
Each directed step \((p,q)\) occurs exactly \(\tau_w(p,q)\) times in the walk.
By collecting equal terms, we obtain
$$
R_g(w)
=
\sum_{(p,q)\in V\times V}
\tau_w(p,q)\,g(p,q).
$$
Hence \(R_g(w)\) depends on the directed transition counts and not on
their order.
\end{proof}
}

Proposition~\ref{thm:additive} applies to one-step readings with values in an
abelian group. It does not apply to arbitrary functions of the ordered
sequence, nor to noncommutative updates.
\medskip

Let
\begin{equation}\label{eq:AB-blocks}
A=(000;0101),
\qquad
B=(000;1212).
\end{equation}
be two  square loops, and define
\[
w_{AB}=AB,
\qquad
w_{BA}=BA.
\]
Both walks are closed and have length eight.

\begin{example}\label{ex:ABBA}
The walk \(A\) is the square loop in coordinates \(0\) and \(1\), and
\(B\) is the square loop in coordinates \(1\) and \(2\), both based at
\(000\). Hence \(AB\) and \(BA\) traverse the same two loops, but in
different order (Figure 2). {The corresponding vertex sequences are
$$
0,1,3,2,0,2,6,4,0
\qquad\text{and}\qquad
0,2,6,4,0,1,3,2,0 .
$$}
\end{example}

\begin{theorem}\label{thm:order}
The walks \(w_{AB}\) and \(w_{BA}\) have the same directed transition
counts but different reduced loop words:
\[
\tau_{w_{AB}}=\tau_{w_{BA}},
\qquad
\rho(w_{AB})\neq\rho(w_{BA}).
\]
\end{theorem}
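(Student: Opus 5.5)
The plan is to compute both sides directly from the definitions: the transition counts by a concatenation argument, and the reduced loop words by transcribing each square loop once using the convention fixed in Section~\ref{sec:carrier}.

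\textbf{Transition counts.} Both \(A\) and \(B\) are based closed walks at \(o=000\). Hence in \(AB\) and in \(BA\) the directed edges are exactly those of \(A\) followed by those of \(B\), or the reverse. No new directed edge appears at the junction, because the junction is just the shared vertex \(0\). So
\[
\tau_{w_{AB}}=\tau_A+\tau_B=\tau_{w_{BA}},
\]
since \(\tau_{uv}=\tau_u+\tau_v\) for concatenable walks. This can also be read off from the two vertex sequences in Example~\ref{ex:ABBA}: both consist of the directed steps
\[
0\to1,\ 1\to3,\ 3\to2,\ 2\to0,\ 0\to2,\ 2\to6,\ 6\to4,\ 4\to0,
\]
each occurring once.

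\textbf{The loop words of \(A\) and \(B\).} By multiplicativity of \(\rho\), it suffices to compute \(\rho(A)\) and \(\rho(B)\).

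The transcription after Figure~1 already gives \(\rho(A)=\gen{1}^{-1}\).

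For \(B\), the vertex sequence is \(0,2,6,4,0\), and I check each edge against the Gray-code tree (tree edges \(0\)-\(1\), \(1\)-\(3\), \(3\)-\(2\), \(2\)-\(6\), \(6\)-\(7\), \(7\)-\(5\), \(5\)-\(4\)):
\begin{itemize}
\item the step \(0\to2\) crosses the cotree edge \((0,2)\) forward and contributes \(\gen{1}\);
\item the step \(2\to6\) is a tree edge and contributes nothing;
\item the step \(6\to4\) crosses the cotree edge \((4,6)\) backward and contributes \(\gen{5}^{-1}\);
\item the step \(4\to0\) crosses the cotree edge \((0,4)\) backward and contributes \(\gen{2}^{-1}\).
\end{itemize}
Hence \(\rho(B)=\gen{1}\gen{5}^{-1}\gen{2}^{-1}\).

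\textbf{Comparing the products.} We get
\[
\rho(w_{AB})=\gen{1}^{-1}\gen{1}\gen{5}^{-1}\gen{2}^{-1}=\gen{5}^{-1}\gen{2}^{-1},
\qquad
\rho(w_{BA})=\gen{1}\gen{5}^{-1}\gen{2}^{-1}\gen{1}^{-1}.
\]
The second word is freely reduced, since no adjacent letters are mutually inverse. Reduced words in a free group are unique, and these two reduced words have lengths \(2\) and \(4\), so they are distinct elements of \(F_5\).

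The only real obstacle is bookkeeping. One must correctly identify which edges of \(B\) are cotree edges and with which orientation they are crossed, following the integer-labelling convention. A sign slip here could accidentally produce a cancellation. I would double-check this step against Figure~1 before concluding.
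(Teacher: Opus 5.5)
Your proposal is correct and follows the same route as the paper's proof: additivity of $\tau$ under concatenation, then $\rho(A)=x_1^{-1}$ and $\rho(B)=x_1x_5^{-1}x_2^{-1}$, giving the distinct reduced words $x_5^{-1}x_2^{-1}$ and $x_1x_5^{-1}x_2^{-1}x_1^{-1}$. Your edge-by-edge check of $B$ against the Gray tree is accurate, and it spells out the value of $\rho(B)$ that the paper states without derivation.
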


\begin{proof}
Transition counts are additive under concatenation, so we have
$$
\tau_{AB}=\tau_A+\tau_B=\tau_B+\tau_A=\tau_{BA}.
$$
We have
\[
\rho(A)=\gen{1}^{-1},
\qquad
\rho(B)=\gen{1}\gen{5}^{-1}\gen{2}^{-1}.
\]
Therefore, we obtain
$$
\rho(AB)=\gen{5}^{-1}\gen{2}^{-1},
\qquad
\rho(BA)=
\gen{1}\gen{5}^{-1}\gen{2}^{-1}\gen{1}^{-1}.
$$
These are distinct reduced words in \(F_5\).
\end{proof}
\begin{figure}[t]\label{cube2}
\centering
\begin{tikzpicture}[scale=0.92,
  vtx/.style={circle,draw,fill=white,inner sep=0.9pt,font=\tiny},
  sA/.style={-stealth,thick,blue!60!black},
  sB/.style={-stealth,thick,red!60!black},
  num/.style={font=\tiny,inner sep=1pt}]

\begin{scope}
  \coordinate (a000) at (0,0);      \coordinate (a001) at (0,1.6);
  \coordinate (a010) at (1.5,0.55); \coordinate (a011) at (1.5,2.15);
  \coordinate (a100) at (2.4,-0.85);\coordinate (a110) at (3.9,-0.3);
  \draw[sA] (a000)--(a001) node[num,midway,left]{1};
  \draw[sA] (a001)--(a011) node[num,midway,above]{2};
  \draw[sA] (a011)--(a010) node[num,midway,right]{3};
  \draw[sA,dashed] (a010) to[bend left=16] node[num,midway,above left]{4} (a000);
  \draw[sB,dashed] (a000) to[bend left=16] node[num,midway,below right]{5} (a010);
  \draw[sB] (a010)--(a110) node[num,midway,above]{6};
  \draw[sB,dashed] (a110)--(a100) node[num,midway,below]{7};
  \draw[sB,dashed] (a100)--(a000) node[num,midway,below left]{8};
  \foreach \p/\l in {a000/000,a001/001,a010/010,a011/011,a100/100,a110/110}
    \node[vtx] at (\p) {\l};
  \node[font=\small] at (1.7,-1.6) {\(w_{AB}=AB\)};
\end{scope}

\begin{scope}[shift={(5.6,0)}]
  \coordinate (b000) at (0,0);      \coordinate (b001) at (0,1.6);
  \coordinate (b010) at (1.5,0.55); \coordinate (b011) at (1.5,2.15);
  \coordinate (b100) at (2.4,-0.85);\coordinate (b110) at (3.9,-0.3);
  \draw[sB,dashed] (b000) to[bend left=16] node[num,midway,below right]{1} (b010);
  \draw[sB] (b010)--(b110) node[num,midway,above]{2};
  \draw[sB,dashed] (b110)--(b100) node[num,midway,below]{3};
  \draw[sB,dashed] (b100)--(b000) node[num,midway,below left]{4};
  \draw[sA] (b000)--(b001) node[num,midway,left]{5};
  \draw[sA] (b001)--(b011) node[num,midway,above]{6};
  \draw[sA] (b011)--(b010) node[num,midway,right]{7};
  \draw[sA,dashed] (b010) to[bend left=16] node[num,midway,above left]{8} (b000);
  \foreach \p/\l in {b000/000,b001/001,b010/010,b011/011,b100/100,b110/110}
    \node[vtx] at (\p) {\l};
  \node[font=\small] at (1.7,-1.6) {\(w_{BA}=BA\)};
\end{scope}
\end{tikzpicture}
\caption{The square faces \(A\) and \(B\) concatenated in both orders,
with the step order marked and cotree edges dashed. The two walks
traverse the same directed edges, so \(\tau_{w_{AB}}=\tau_{w_{BA}}\),
while their reduced loop words differ.}
\label{fig:abba}
\end{figure}

Thus no reading determined by the directed transition counts can recover
the reduced loop word. Since a walk \(w=e_1\cdots e_n\) records the order
of its directed edges, an arbitrary function of \(w\) may depend on this
order. The claim is only that commutative aggregation does not.
\section{The fourteen-step witness}\label{sec:fourteen}

The pair \(w_{AB},w_{BA}\) shows that additive readings do not detect order, but each walk has
nonzero abelian loop content. We now give a stronger witness. Its five
signed cycle counts are all zero, but it has nontrivial
noncommutative information.

Let \(w\) be a walk of length \(n\). Write
$$
 a_i(w)\in\Z,\qquad 1\leq i\leq5,
$$
for the signed traversal count of the \(i\)-th cotree edge. Let
\(s_i^{(k)}\in\{-1,0,1\}\) be the contribution of step \(k\) to this count, and set
$$
a_i^{(0)}=0,
\qquad
a_i^{(k)}=\sum_{\ell=1}^{k}s_i^{(\ell)}.
$$
Thus \(a_i^{(k-1)}\) is the signed count before step \(k\),  and
\(
a_i^{(n)}=a_i(w)
\). We define the
antisymmetric degree two coordinate
$$
 \mu_{ij}(w)
 =
 \sum_{k=1}^{n}
 \left(
 a_i^{(k-1)}s_j^{(k)}
 -
 a_j^{(k-1)}s_i^{(k)}
 \right).
$$
We write
$$
\mu(w)=\bigl(\mu_{ij}(w)\bigr)_{1\leq i<j\leq5}\in\Z^{10}.
$$
{Thus \(\mu_{ij}\) records the order interaction between the \(i\)-th and
\(j\)-th cotree crossings: each \(j\)-crossing is weighted by the current
\(i\)-count, and each \(i\)-crossing by the current \(j\)-count with the
opposite sign.

We use the usual commutator convention
\(
[x,y]=xyx^{-1}y^{-1}.
\)

{\begin{lemma}\label{lem:magnus}
The coordinates \(\mu_{ij}\) are invariant under free reduction, so
\(\mu(w)\) depends only on the reduced loop word \(\rho(w)\). If \(a(w)=0\), then each
\(\mu_{ij}(w)\) is even and, under the isomorphism
\[
\gamma_2(F_5)/\gamma_3(F_5)\cong\Lambda^2\Z^5,
\]
where \([x_i,x_j]\) corresponds to \(x_i\wedge x_j\), the class of
\(\rho(w)\) has coefficient \(\mu_{ij}(w)/2\) on
\(x_i\wedge x_j\) for \(i<j\).
In particular, if \(a(w)=0\), then
\[
\rho(w)\in\gamma_3(F_5)
\quad\Longleftrightarrow\quad
\mu(w)=0.
\]
\end{lemma}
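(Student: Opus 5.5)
The plan is to read \(\mu_{ij}\) directly off the letter sequence \(\ell(w)\). Tree steps have \(s^{(k)}=0\) and contribute nothing, so \(\mu_{ij}(w)\) depends only on the word \(\ell(w)\in\{x_1^{\pm1},\ldots,x_5^{\pm1}\}^*\). On such a word I would define \(\mu_{ij}\) by the same prefix-sum formula. I would then show that it is a function on \(F_5\) by checking invariance under inserting or deleting a cancelling pair \(x_m^{\varepsilon}x_m^{-\varepsilon}\).

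Suppose the pair is inserted at a position where the prefix counts are \(a^{(k)}\). The two letters contribute
\[
a_i^{(k)}s_j-a_j^{(k)}s_i \quad\text{and}\quad (a_i^{(k)}+s_i)(-s_j)-(a_j^{(k)}+s_j)(-s_i),
\]
where \(s=\pm e_m\). Their sum is \(-s_is_j+s_js_i=0\). The pair leaves all later prefix counts unchanged, so later terms are unaffected. Hence \(\mu\) factors through \(\rho\).

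Next I would identify \(\mu\) algebraically. Concatenation gives the cocycle rule
\[
\mu_{ij}(uv)=\mu_{ij}(u)+\mu_{ij}(v)+a_i(u)a_j(v)-a_j(u)a_i(v).
\]
This follows by splitting the sum at the junction, since the prefix counts in \(v\) are shifted by \(a(u)\). Equivalently, \(g\mapsto(a(g),\mu(g))\) is a homomorphism from \(F_5\) into the group \(\Z^5\times\Z^{10}\) with the multiplication
\[
(a,m)(a',m')=(a+a',\,m+m'+a\wedge a'),
\]
where \(a\wedge a'\) has entries \(a_ia'_j-a_ja'_i\). This group is nilpotent of class two, so \(\gamma_3(F_5)\) lies in the kernel. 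Restricted to \(\gamma_2(F_5)\), where \(a=0\), the rule shows that \(\mu\) is additive, so it induces a homomorphism \(\gamma_2/\gamma_3\to\Z^{10}\).

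To read off coefficients I would compute \(\mu\) on a basic commutator \([x_p,x_q]=x_px_qx_p^{-1}x_q^{-1}\) with \(p<q\). The letters come in the order \(x_p,x_q,x_p^{-1},x_q^{-1}\).

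\begin{itemize}
\item For \((i,j)=(p,q)\), the terms are \(0\), then \(1\cdot1=1\), then \(-(1)(-1)=1\), then \(0\), giving \(\mu_{pq}=2\).
\item For every other pair, \(\mu_{ij}=0\).
\end{itemize}

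Take the standard basis \([x_i,x_j]\), \(i<j\), of \(\gamma_2/\gamma_3\cong\Lambda^2\Z^5\), which the paper assumes. If \(\rho(w)\) has class \(\sum c_{ij}\,x_i\wedge x_j\), additivity gives \(\mu_{ij}(w)=2c_{ij}\). Hence each \(\mu_{ij}(w)\) is even and \(c_{ij}=\mu_{ij}(w)/2\).

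The final equivalence follows from this coefficient formula. Since \(\Lambda^2\Z^5\) is free with basis \(\{x_i\wedge x_j\}\), the class is zero exactly when all \(c_{ij}=0\), that is, when \(\mu(w)=0\).

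The main point needing care is the isomorphism \(\gamma_2/\gamma_3\cong\Lambda^2\Z^5\), which I take from the standard theory, for instance via the Magnus embedding or Hall's basis theorem. Granting it, I only need that the commutator classes span \(\gamma_2/\gamma_3\) and that \(\mu\) is a homomorphism on it, and both have been established above.
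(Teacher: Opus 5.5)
Your proof is correct, but it identifies the coefficient by a different mechanism than the paper. The free-reduction invariance check is the same computation in both.

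For parity, the paper uses the discrete product rule. Since one step crosses at most one cotree edge, \(a_i^{(k)}a_j^{(k)}-a_i^{(k-1)}a_j^{(k-1)}=a_i^{(k-1)}s_j^{(k)}+a_j^{(k-1)}s_i^{(k)}\). Summing gives \(e_{ij}+e_{ji}=a_i(w)a_j(w)\), where \(e_{ij}=\sum_k a_i^{(k-1)}s_j^{(k)}\). Hence \(\mu_{ij}=e_{ij}-e_{ji}=2e_{ij}\) when \(a(w)=0\). The paper then cites the degree-two Magnus expansion to say that \(e_{ij}\) is the coefficient of \(x_i\wedge x_j\).

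You instead package \((a,\mu)\) as a homomorphism into a class-two nilpotent group with cocycle \(a\wedge a'\). This gives vanishing on \(\gamma_3(F_5)\) and additivity on \(\gamma_2(F_5)\) at once. You then evaluate on basic commutators to get \(\mu_{pq}([x_p,x_q])=2\), so parity and the coefficient formula come out together.

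Your route avoids the Magnus expansion entirely. It needs only that the classes of \([x_i,x_j]\), \(i<j\), span \(\gamma_2(F_5)/\gamma_3(F_5)\), which is elementary commutator calculus. Since the images of these classes are twice distinct standard basis vectors of \(\Z^{10}\), your argument in fact reproves the freeness half of \(\gamma_2/\gamma_3\cong\Lambda^2\Z^5\).

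The paper's route gets parity from a one-line identity that uses no structure theory of \(F_5\). It also names the coefficient explicitly as the iterated sum \(e_{ij}\), which is the link to the signature interpretation discussed after the lemma.

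One step you leave implicit is that \(a(w)=0\) places \(\rho(w)\) in \(\gamma_2(F_5)\), because \(\gamma_2(F_5)\) is the kernel of abelianization. It is standard, but it is what lets you speak of the class of \(\rho(w)\) at all.
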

\begin{proof}
For invariance, insert an adjacent pair \(x_mx_m^{-1}\) into the cotree
letter sequence. Fix \(i<j\). If \(m\notin\{i,j\}\), the inserted letters
give no contribution to \(\mu_{ij}\). If \(m=i\), the two contributions are
\[
-a_j^{(k-1)}
\qquad\text{and}\qquad
+a_j^{(k-1)},
\]
while if \(m=j\), they are
\[
+a_i^{(k-1)}
\qquad\text{and}\qquad
-a_i^{(k-1)}.
\]
In both cases they cancel. After the inserted pair, all running counts have
their previous values, so no later term is affected. The case
\(x_m^{-1}x_m\) is identical. Therefore \(\mu\) is invariant under free
reduction, and hence depends only on \(\rho(w)\).

For the parity, we write
\[
e_{ij}
=
\sum_{k=1}^{n} a_i^{(k-1)}s_j^{(k)}.
\]
At step \(k\), we have
\[
a_i^{(k)}
=
a_i^{(k-1)}+s_i^{(k)},
\qquad
a_j^{(k)}
=
a_j^{(k-1)}+s_j^{(k)}.
\]
Hence
\[
a_i^{(k)}a_j^{(k)}
-
a_i^{(k-1)}a_j^{(k-1)}
=
a_i^{(k-1)}s_j^{(k)}
+
a_j^{(k-1)}s_i^{(k)}
+
s_i^{(k)}s_j^{(k)}.
\]
One step crosses at most one cotree edge, so
\[
s_i^{(k)}s_j^{(k)}=0
\qquad (i\neq j).
\]
Summing over \(k\), and using
\(a_i^{(0)}=a_j^{(0)}=0\), we obtain
\[
e_{ij}+e_{ji}=a_i(w)a_j(w).
\]
From the definition, we also have
\[
\mu_{ij}=e_{ij}-e_{ji}.
\]
If \(a(w)=0\), then \(e_{ji}=-e_{ij}\), and therefore
\[
\mu_{ij}=2e_{ij}.
\]
In particular, each \(\mu_{ij}(w)\) is even.

If \(a(w)=0\), then \(\rho(w)\in\gamma_2(F_5)\). By the degree-two
case of the Magnus expansion
(see e.g. \cite{magnus1935,chen-fox-lyndon,magnus-karrass-solitar,gadish}),
the coefficient of \(x_i\wedge x_j\) in the class of \(\rho(w)\) is
\(e_{ij}=\mu_{ij}(w)/2\). Hence this class is zero if and only if
\(\mu(w)=0\). Therefore,
\[
\rho(w)\in\gamma_3(F_5)
\quad\Longleftrightarrow\quad
\mu(w)=0.
\]
\end{proof}}

We measure length in the \(Q_3\) edge metric, before spanning-tree
reduction. This differs from shortest word results in the free
generator metric
\cite{malestein-putman,elkasapy-thom,malestein-putman-2024}.

When \(a(w)=0\), geometrically \(\mu_{ij}/2\) is a signed area coordinate. For smooth
paths the same term appears in the path signature
\cite{hambly-lyons,chevyrev-lyons}. For a discrete sequence of
increments it is the degree-two coordinate of the iterated sums
signature \cite{diehl-ebrahimi-fard-tapia}.
\medskip 
{Let us consider the walk
\begin{equation}\label{walk}
W=
(000;\,
01012020101202).
\end{equation}
Its vertex sequence is
\[
0,1,3,2,0,4,5,1,0,2,3,1,5,4,0.
\]

\begin{theorem}\label{thm:witness}
The walk \(W\) defined by \eqref{walk} is closed and satisfies
\(
a(W)=0,
\)
while
\(
\mu_{12}(W)=-2.
\)
Its reduced loop word is
\[
\rho(W)
=
\gen{1}^{-1}\gen{2}\gen{3}^{-1}\gen{1}\gen{3}\gen{2}^{-1}.
\]
In particular,
\(
\rho(W)\neq1
\)
in \(F_5\).
\end{theorem}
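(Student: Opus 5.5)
The plan is a direct verification in four steps, each reduced to a finite check by results already in place. First, closedness follows from Lemma~\ref{lem:closure}: the step sequence \(01012020101202\) contains the letter \(0\) six times and each of the letters \(1\) and \(2\) four times, so all three counts are even. As a consistency check, I would also recompute the vertex sequence \(0,1,3,2,0,4,5,1,0,2,3,1,5,4,0\) from the rule that a step in direction \(i\) changes the integer label by \(\pm 2^i\). It ends at \(0\).

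Second, I will transcribe \(\ell(W)\) using the convention fixed for the square walk \((000;0101)\). The tree edges are \(0\!-\!1,\ 1\!-\!3,\ 3\!-\!2,\ 2\!-\!6,\ 6\!-\!7,\ 7\!-\!5,\ 5\!-\!4\), and the cotree edges are \(\gen1=(0,2)\), \(\gen2=(0,4)\), \(\gen3=(1,5)\), \(\gen4=(3,7)\), \(\gen5=(4,6)\), each oriented from the smaller label to the larger. Going through the fourteen directed edges, only six cross cotree edges:
\begin{itemize}
\item \(2\to0\) gives \(\gen1^{-1}\),
\item \(0\to4\) gives \(\gen2\),
\item \(5\to1\) gives \(\gen3^{-1}\),
\item \(0\to2\) gives \(\gen1\),
\item \(1\to5\) gives \(\gen3\),
\item \(4\to0\) gives \(\gen2^{-1}\).
\end{itemize}
This yields \(\ell(W)=\gen1^{-1}\gen2\gen3^{-1}\gen1\gen3\gen2^{-1}\). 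No two adjacent letters are mutually inverse, so this word is already freely reduced and equals \(\rho(W)\). Being a nonempty reduced word, it is nontrivial in \(F_5\).

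Third, \(a(W)=0\) is immediate from the exponent sums. Each of \(\gen1,\gen2,\gen3\) appears once with exponent \(+1\) and once with \(-1\), and \(\gen4,\gen5\) do not appear. Equivalently, each of the edges \((0,2)\), \((0,4)\), \((1,5)\) is traversed once in each direction.

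Fourth, I compute \(\mu_{12}(W)\) from its definition by tracking the running counts \(a_1^{(k-1)}\) and \(a_2^{(k-1)}\) at the four steps crossing \(\gen1\) or \(\gen2\):
\begin{itemize}
\item at \(\gen1^{-1}\), the current \(a_2\) is \(0\), so the contribution is \(0\);
\item at \(\gen2\), the current \(a_1\) is \(-1\), so the contribution is \(a_1s_2=-1\);
\item at \(\gen1\), the current \(a_2\) is \(1\), so the contribution is \(-a_2s_1=-1\);
\item at \(\gen2^{-1}\), the current \(a_1\) is \(0\), so the contribution is \(0\).
\end{itemize}
The total is \(\mu_{12}(W)=-2\). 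By Lemma~\ref{lem:magnus}, since \(a(W)=0\), this also shows \(\rho(W)\notin\gamma_3(F_5)\), which gives an independent proof that \(\rho(W)\neq1\).

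There is no conceptual obstacle here; the argument is pure bookkeeping. The only real risk is a sign or orientation error in the transcription step, where each crossing must be checked against the smaller-to-larger orientation of its cotree edge. To guard against this, I would cross-check the result: \(\rho(W)\) should equal \(\rho(A)\,\rho(C)\,\rho(A)^{-1}\rho(C)^{-1}\)-like structure read off from the two halves of the walk, and the sign of \(\mu_{12}\) should agree with Lemma~\ref{lem:magnus}'s identification with the coefficient of \(\gen1\wedge\gen2\).
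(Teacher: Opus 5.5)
Your proposal is correct and is essentially the paper's proof. Both trace the fourteen steps back to \(0\), read off the cotree letters \(\gen{1}^{-1},\gen{2},\gen{3}^{-1},\gen{1},\gen{3},\gen{2}^{-1}\), note that the word is freely reduced with all exponent sums zero, and add the contributions \(0,-1,-1,0\) at the crossings of \(\gen{1}^{\pm1},\gen{2}^{\pm1}\) to get \(\mu_{12}(W)=-2\). The parity check via Lemma~\ref{lem:closure} and the second route to \(\rho(W)\neq1\) via Lemma~\ref{lem:magnus} are harmless extras, and your informal cross-check is exact: \(W\) is the concatenation of \(A\), \(C=(000;2020)\), the reverse of \(A\) and the reverse of \(C\) with one tree-edge backtrack deleted, so \(\rho(W)=[\gen{1}^{-1},\gen{2}\gen{3}^{-1}]\).
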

\begin{proof}
If we run the displayed fourteen steps (Figure 3), the walk returns to \(0\). Its
cotree letter sequence is
\[
\gen{1}^{-1},\ \gen{2},\ \gen{3}^{-1},\
\gen{1},\ \gen{3},\ \gen{2}^{-1}.
\]
We have that each exponent sum is zero, and the displayed word has no
adjacent inverse pair. Hence it is freely reduced, and therefore
\[
\rho(W)=\gen{1}^{-1}\gen{2}\gen{3}^{-1}\gen{1}\gen{3}\gen{2}^{-1}.
\]
For the pair \((1,2)\), the six cotree letters contribute, in order,
\[
0,\ -1,\ 0,\ -1,\ 0,\ 0,
\]
so we obtain \(\mu_{12}(W)=-2\). 
\end{proof}}
 \begin{figure}[t]
\centering
\begin{tikzpicture}[xscale=0.86,yscale=0.8,
  bx/.style={draw,minimum width=.60cm,minimum height=.52cm,font=\small},
  sm/.style={font=\tiny}]
\foreach \k/\ax/\vv/\ltr/\mm in {%
  1/0/1/{}/{},
  2/1/3/{}/{},
  3/0/2/{}/{},
  4/1/0/{\(\gen{1}^{-1}\)}/{\(0\)},
  5/2/4/{\(\gen{2}\)}/{\(-1\)},
  6/0/5/{}/{},
  7/2/1/{\(\gen{3}^{-1}\)}/{\(0\)},
  8/0/0/{}/{},
  9/1/2/{\(\gen{1}\)}/{\(-1\)},
  10/0/3/{}/{},
  11/1/1/{}/{},
  12/2/5/{\(\gen{3}\)}/{\(0\)},
  13/0/4/{}/{},
  14/2/0/{\(\gen{2}^{-1}\)}/{\(0\)}}
{
  \node[sm] at (\k,1.0)   {\k};
  \node[bx] at (\k,0.4)   {\ax};
  \node[sm] at (\k,-0.2)  {\vv};
  \node[sm] at (\k,-0.85) {\ltr};
  \node[sm] at (\k,-1.45) {\mm};
}
\node[sm,anchor=east] at (0.4,1.0)   {step};
\node[sm,anchor=east] at (0.4,0.4)   {direction};
\node[sm,anchor=east] at (0.4,-0.2)  {vertex};
\node[sm,anchor=east] at (0.4,-0.85) {cotree letter};
\node[sm,anchor=east] at (0.4,-1.45) {term in \(\mu_{12}\)};
\end{tikzpicture}
\caption{The fourteen-step walk \(W\) of Theorem~\ref{thm:witness}: for
each step, the direction used, the vertex reached in decimal code, the
cotree letter contributed, and its term in \(\mu_{12}\). The six letters
spell \(\gen{1}^{-1}\gen{2}\gen{3}^{-1}\gen{1}\gen{3}\gen{2}^{-1}\), all
exponent sums vanish, so \(a(W)=0\) while \(\rho(W)\neq1\) and
\(\mu_{12}(W)=-2\).}
\label{fig:witness}
\end{figure}

Since \(W\) is closed and \(a(W)=0\), Section~\ref{sec:carrier} gives
\(z(W)=0\). Hence
$
R_g(W)=0
$
for every antisymmetric weight \(g\). Thus every antisymmetric additive reading, in particular every potential reading, takes the value \(0\) on \(W\), as on the empty walk. Additive readings that are not antisymmetric can separate \(W\) from the empty walk: for \(G=\Z\), the constant weight
\(g\equiv1\) gives \(R_g(W)=14\).

{\begin{lemma}\label{lem:nok23}
For every \(n\geq1\), the graph \(Q_n\) contains no subgraph isomorphic to
\(K_{2,3}\).
\end{lemma}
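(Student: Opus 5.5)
The plan is to show that any two distinct vertices of \(Q_n\) have at most two common neighbours. A subgraph isomorphic to \(K_{2,3}\) would supply two distinct vertices \(u,v\), one side of the bipartition, together with three distinct vertices \(c_1,c_2,c_3\), each adjacent to both \(u\) and \(v\). So this bound on common neighbours is exactly what must be ruled out. Note that the subgraph need not be induced, so only adjacencies present in the subgraph are used, and these are edges of \(Q_n\).

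We identify \(V(Q_n)=\{0,1\}^n\), where two vertices are adjacent if and only if their Hamming distance is \(1\). Suppose \(c\) is a common neighbour of distinct vertices \(u\) and \(v\). By the triangle inequality, \(d(u,v)\le 2\). The distance \(d(u,v)\) is also even: the parity of the coordinate sum changes along every edge, and \(u\) and \(v\) are both adjacent to \(c\). Since \(u\neq v\), this forces \(d(u,v)=2\). Hence \(u\) and \(v\) differ in exactly two coordinates, say \(i\) and \(j\).

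Next we count the common neighbours. Write \(c=u+e_k\), where \(e_k\) is the \(k\)-th unit vector. We need \(c+v=u+v+e_k\) to have weight \(1\). Since \(u+v=e_i+e_j\), this holds exactly when \(k\in\{i,j\}\). Therefore \(u\) and \(v\) have exactly two common neighbours, \(u+e_i\) and \(u+e_j\). This contradicts the existence of three distinct \(c_1,c_2,c_3\).

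No step is a real obstacle. The only care needed is the parity argument that excludes \(d(u,v)=1\), and the observation that a non-induced copy of \(K_{2,3}\) still yields three distinct common neighbours. The case \(n=1\) is trivial, since \(Q_1\) has only two vertices.
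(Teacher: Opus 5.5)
Your proposal is correct and follows essentially the same route as the paper's proof. Both arguments show that two distinct vertices of \(Q_n\) with a common neighbour differ in exactly two coordinates, hence have at most two common neighbours, and then observe that the two-vertex side of \(K_{2,3}\) would need three. Your parity argument simply spells out the step the paper compresses into ``so \(p\) and \(q\) differ in exactly two coordinates.''
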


\begin{proof}
Let \(p,q\) be distinct vertices of \(Q_n\), and let \(r\) be a common
neighbour of \(p\) and \(q\). Then \(r\) differs from each of them in one
coordinate, so \(p\) and \(q\) differ in exactly two coordinates, let us say
\(i\) and \(j\). In that case \(r\) is obtained from \(p\) by changing
coordinate \(i\), or by changing coordinate \(j\). Hence any two distinct
vertices of \(Q_n\) have at most two common neighbours.

In \(K_{2,3}\) the two vertices of the part of size two have three common
neighbours. Therefore \(K_{2,3}\) is not a subgraph of \(Q_n\).
\end{proof}}

\begin{theorem}\label{thm:fourteen-minimal}
Among based closed walks on \(\Qcube\), the shortest walk satisfying
\[
a(w)=0
\quad\text{and}\quad
\mu(w)\neq0
\]
has length \(14\). Equivalently,
\[\min\bigl\{|w|:\rho(w)\in\gamma_2(F_5)\setminus\gamma_3(F_5)\bigr\}=14.\]
The walk \(W\) realizes this minimum.
\end{theorem}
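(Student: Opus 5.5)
The proof has two parts: an upper bound from the explicit witness, and a lower bound from a counting argument on the set of edges the walk uses.

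\emph{Upper bound.} By Theorem~\ref{thm:witness}, the walk \(W\) of \eqref{walk} has length \(14\), satisfies \(a(W)=0\), and has \(\mu_{12}(W)=-2\neq0\). By Lemma~\ref{lem:magnus}, this means \(\rho(W)\in\gamma_2(F_5)\setminus\gamma_3(F_5)\). The same lemma shows that the two formulations of the minimum describe the same set of walks, because for \(a(w)=0\) it gives \(\rho(w)\in\gamma_2\) and \(\rho(w)\in\gamma_3\Leftrightarrow\mu(w)=0\). It remains to show that every closed walk \(w\) with \(a(w)=0\) and \(\mu(w)\neq0\) satisfies \(|w|\geq14\).

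\emph{Step 1: every used edge is used at least twice.} Since \(a(w)=0\), Section~\ref{sec:carrier} gives \(z(w)=0\). Hence every edge is traversed equally often in the two directions. Let \(H\) be the subgraph formed by the edges actually traversed by \(w\). Then \(H\) is connected, contains \(o\), and each of its edges is crossed at least twice, once in each direction. This gives
\[
|w|\geq 2|E(H)|.
\]

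\emph{Step 2: \(H\) has cycle rank at least \(2\).} Suppose instead that \(H\) has cycle rank at most \(1\). The walk \(w\) lives in \(H\), so \(\rho(w)\) lies in the image of \(\pi_1(H,o)\) in \(F_5\). We could choose the spanning tree compatibly with \(H\), but it suffices to note that this image is a subgroup of a free group generated by at most one element, namely the conjugate of a single fundamental cycle. A cyclic subgroup embeds into \(\Z^5\) under \(\ab\) unless it is trivial, because a nontrivial element of a free group has infinite order and a nonzero power of \(g\) lies in \(\gamma_2\) only if \(g\) does. Hence \(a(w)=0\) forces \(\rho(w)=1\), and then \(\mu(w)=0\), a contradiction. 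I expect this step to need the most care: one has to verify cleanly that \(\ab\) is injective on the relevant cyclic subgroup, which holds because the abelianized class of a cycle equals its nonzero cycle vector \(z\).

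\emph{Step 3: such an \(H\) has at least \(7\) edges.} Since \(H\) has cycle rank at least \(2\), it contains two distinct cycles \(C,C'\). Every cycle of \(Q_n\) has even length at least \(4\), because the cube is bipartite and simple. There are two cases.

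If \(C\) and \(C'\) share no edge, then \(|E(H)|\geq8\).

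Otherwise \(C\cup C'\) contains a theta subgraph: three internally disjoint paths of lengths \(p\leq q\leq r\) joining two vertices. Each sum \(p+q\), \(p+r\), \(q+r\) is an even cycle length at least \(4\), so \(p,q,r\) have a common parity. Simplicity excludes \(p=q=1\). Lemma~\ref{lem:nok23} excludes \(p=q=r=2\), since that configuration is exactly \(K_{2,3}\). If all three are odd, this leaves \(p+q+r\geq1+3+3=7\). If all three are even, it leaves \(p+q+r\geq2+2+4=8\).

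In every case \(|E(H)|\geq7\), so \(|w|\geq14\). Together with the witness \(W\), this proves the theorem.
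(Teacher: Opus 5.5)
Your proof is correct. The upper bound, the inequality \(|w|\geq 2|E(H)|\), and the cycle-rank step agree in substance with the paper. The paper treats ranks \(0\) and \(1\) separately and uses \(\pi_1(H)\cong H_1(H;\Z)\) in rank one, while you show that \(\ab\) is injective on the cyclic image of \(\pi_1(H,o)\) in \(F_5\).

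One sentence there should be tightened. The claim ``a cyclic subgroup embeds into \(\Z^5\) under \(\ab\) unless it is trivial'' is false in general: consider \(\langle[\gen{1},\gen{2}]\rangle\). What you actually need, and do supply at the end, is that the generator has nonzero abelianization because its cycle vector is nonzero. Torsion-freeness of \(\Z^5\) then gives the conclusion.

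The genuine difference is in proving \(|E(H)|\geq7\). The paper argues by counting: cycle rank at least two gives \(e\geq v+1\), the bipartite bound \(e\leq\lfloor v^2/4\rfloor\) excludes \(v\leq4\), and the remaining case \(v=5\), \(e=6\) forces \(H\cong K_{2,3}\). You argue structurally. Two distinct cycles are either edge-disjoint, giving at least \(8\) edges, or their union contains a theta graph. Its three path lengths have a common parity by bipartiteness, \(K_{2,3}\)-freeness removes \((2,2,2)\), and \((1,3,3)\) is left as the cheapest case.

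The paper's route is shorter and uses nothing beyond edge counts. Yours relies on the standard fact that two distinct cycles sharing an edge contain a theta subgraph. That fact deserves one line in your write-up: take an ear of \(C'\) attached to \(C\). In return, your route identifies the extremal configuration. A \(7\)-edge \(H\) must be two \(4\)-cycles sharing an edge, which is exactly the edge set traversed by \(W\).
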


\begin{proof}
The walk \(W\) of Theorem~\ref{thm:witness} has length \(14\) and
satisfies
\[
a(W)=0,
\qquad
\mu(W)\neq0.
\]
Hence the minimum is at most \(14\).

We prove the lower bound. Let \(w\) be a based closed walk such that
\[
a(w)=0,
\qquad
\mu(w)\neq0.
\]
Let \(H\) be the subgraph of \(Q_3\) formed by all edges which occur in
\(w\). Since \(w\) is a walk, \(H\) is connected. Also, since
\(a(w)=0\),  we have
\(
z(w)=0.
\)

We first show that the cycle rank of \(H\) is at least two. Suppose
first that the cycle rank is zero. Then \(H\) is a tree. Every closed
walk in a tree is null-homotopic, so we have
\(
\rho(w)=1.
\)
Since \(\mu(w)\) depends only on \(\rho(w)\), it follows that
\(\mu(w)=0\), which is a contradiction.

Suppose now that the cycle rank of \(H\) is one. Then
\[
\pi_1(H)\cong\Z,
\qquad
H_1(H;\Z)\cong\Z,
\]
and the abelianization map
\[
\pi_1(H)\longrightarrow H_1(H;\Z)
\]
is an isomorphism. The chain \(z(w)\) is supported on \(H\), and we
have \(z(w)=0\). Hence the class of \(w\) is zero in
\(H_1(H;\Z)\). We obtain that \(w\) represents the trivial element of
\(\pi_1(H)\), and therefore also the trivial element of
\(\pi_1(Q_3,o)\). Thus
\(
\rho(w)=1,
\)
which again gives \(\mu(w)=0\), a contradiction. Therefore the cycle
rank of \(H\) is at least two.

Put
\[
v=|V(H)|,
\qquad
e=|E(H)|.
\]
Since \(H\) is connected, its cycle rank is
\(
e-v+1.
\)
Hence
\[
e-v+1\geq2,
\]
and we obtain
\(
e\geq v+1.
\)

We show that \(e\geq7\). Suppose that \(e\leq6\). Then \(v\leq5\).
The graph \(H\) is bipartite, since it is a subgraph of \(Q_3\). If
\(v\leq4\), then
\[
e\leq
\left\lfloor\frac{v^2}{4}\right\rfloor
<
v+1,
\]
which contradicts \(e\geq v+1\). Hence, it follows that
\[
v=5,
\qquad
e=6.
\]
A bipartite graph on five vertices with six edges must have bipartition
sizes \(2\) and \(3\), and all six edges between the two parts are
present. Hence
it is \(K_{2,3}\). This is excluded by Lemma~\ref{lem:nok23}. Hence
\[
e\geq7.
\]  

Finally, \(z(w)=0\) means that every edge of \(H\) is traversed the
same number of times in both directions. Since every edge of \(H\)
occurs in \(w\), every edge is traversed at least once in each
direction. Therefore
\[
|w|\geq2e\geq14.
\]
Together with the walk \(W\), we obtain that the minimum is \(14\).

For the equivalent formulation, we have
\[
a(w)=0
\quad\Longleftrightarrow\quad
\rho(w)\in\gamma_2(F_5).
\]
Under this condition, Lemma~\ref{lem:magnus} gives
\[
\mu(w)\neq0
\quad\Longleftrightarrow\quad
\rho(w)\notin\gamma_3(F_5).
\]
Finally, we obtain
\[
\min\bigl\{
|w|:\rho(w)\in\gamma_2(F_5)\setminus\gamma_3(F_5)
\bigr\}=14.
\]
\end{proof}

\begin{remark}\label{rem:tree-free}
The value \(14\) does not depend on the chosen spanning tree. For two
spanning trees \(T,T'\), the corresponding free bases of
\(\pi_1(Q_3,o)\) differ by an automorphism \(\alpha\) of \(F_5\). Hence
\[
\rho_{T'}=\alpha\circ\rho_T.
\]
Since the terms of the lower central series are characteristic, we have
\[
\rho_T(w)\in\gamma_2(F_5)\setminus\gamma_3(F_5)
\quad\Longleftrightarrow\quad
\rho_{T'}(w)\in\gamma_2(F_5)\setminus\gamma_3(F_5).
\]

Also, the condition \(a(w)=0\) is tree-independent, since it is
equivalent to \(z(w)=0\). Under this condition,
Lemma~\ref{lem:magnus} gives that \(\mu(w)=0\) is also
tree-independent. The individual coordinates \(\mu_{ij}(w)\) may
depend on the tree. In particular,
\[
\mu_{12}(W)=-2
\]
refers to the tree fixed in Section~\ref{sec:carrier}.
\end{remark}


\section{Memory and order resolution}\label{sec:hierarchy}

The previous sections show what is lost by commutative summaries. We now
consider finite sequential memory and an unbounded stack.

 \subsection{Two-state separation from additive readings}

 A \emph{\(k\)-state reading} is a sequential reading in the sense of
Definition~\ref{def:clock} whose state set has at most \(k\) elements. 

{\begin{theorem}\label{thm:twostate}
There is a two-state reading \(D\) such that
\[
D(w_{AB})\neq D(w_{BA}).
\]
\end{theorem}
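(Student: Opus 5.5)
We construct an explicit two-state reading, using only Definition~\ref{def:clock}. The walks \(w_{AB}=(000;01011212)\) and \(w_{BA}=(000;12120101)\) differ already in their first step. The first step of \(w_{AB}\) is the directed edge \(0\to1\), in direction \(0\). The first step of \(w_{BA}\) is the directed edge \(0\to2\), in direction \(1\). So the plan is a two-state reading that remembers whether the very first edge of the walk was \((0,1)\).

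Take \(S=\{s_0,s_1\}\) and \(Y=S\) with \(q=\mathrm{id}\). Define the updates as follows.
\begin{itemize}
\item \(T_{(0,1)}(s_0)=s_1\).
\item Every other directed edge \(e\) satisfies \(T_e(s_0)=s_0\).
\item Every directed edge \(e\), including \((0,1)\), satisfies \(T_e(s_1)=s_1\).
\end{itemize}
Then state \(s_1\) is absorbing, and the final state is \(s_1\) exactly when the edge \((0,1)\) occurs somewhere in the walk.

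This alone does not separate the two walks, since \((0,1)\) occurs in both \(A\) and \(B\)-orders. We therefore refine the reading so that it is sensitive to position. Use instead a reading that fixes its state after the first step. Let \(T_{(0,1)}(s_0)=s_1\), and let the other edges leaving \(0\) send \(s_0\) to a sink. With only two states, the simplest choice is to let the state \(s_0\) mean ``undecided or rejected.'' That does not quite work, because \(s_0\) must also record the start.

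A cleaner option is an order-sensitive two-state reading that flips state on a specific edge and is reset by another. Let \(T_{(0,1)}\) be the constant map to \(s_1\), let \(T_{(0,2)}\) be the constant map to \(s_0\), and let every other edge act as the identity. The final state then records which of the edges \((0,1)\) and \((0,2)\) was traversed last.

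Now check the two walks using the vertex sequences from Example~\ref{ex:ABBA}.
\begin{itemize}
\item In \(w_{AB}\), with vertices \(0,1,3,2,0,2,6,4,0\), the edge \((0,1)\) occurs at step \(1\) and \((0,2)\) at step \(5\). The last of the two is \((0,2)\), so \(D(w_{AB})=s_0\).
\item In \(w_{BA}\), with vertices \(0,2,6,4,0,1,3,2,0\), the edge \((0,2)\) occurs at step \(1\) and \((0,1)\) at step \(5\). The last of the two is \((0,1)\), so \(D(w_{BA})=s_1\).
\end{itemize}
Hence \(D(w_{AB})\neq D(w_{BA})\).

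No step is a real obstacle. The only care needed is to choose updates whose composite depends on order. Constant maps achieve this, since they do not commute with each other; this is why the reading escapes Proposition~\ref{thm:additive}. One must also verify that each listed directed edge actually occurs at the claimed step of each walk, which the vertex sequences above confirm.
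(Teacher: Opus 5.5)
Your final construction is exactly the paper's reading \(D\), up to swapping the two state labels: \(T_{(0,1)}\) is constant \(s_1\), \(T_{(0,2)}\) is constant \(s_0\), every other edge acts as the identity, and the final state records which of \((0,1)\), \((0,2)\) was traversed last; your check via the vertex sequences (each of these edges occurs exactly once, \((0,1)\) only in \(A\) and \((0,2)\) only in \(B\)) is correct. In a write-up, drop the two abandoned attempts (the absorbing-state reading and the ``first step'' idea), since they add nothing to the argument.
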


\begin{proof}
The directed edges of \(A\) are
\[
0\to1,\quad 1\to3,\quad 3\to2,\quad 2\to0,
\]
and the directed edges of \(B\) are
\[
0\to2,\quad 2\to6,\quad 6\to4,\quad 4\to0.
\]
We set
\(
e_A=(0,1)\) and
\(e_B=(0,2).
\)
Then \(e_A\) occurs in \(A\) only,
and \(e_B\) in \(B\) only, each exactly once.

We take
\[
S=\{0,1\},
\qquad
s_0=0,
\qquad
q(s)=s,
\]
and define
\[
T_{e_A}(s)=0,
\qquad
T_{e_B}(s)=1,
\qquad
T_e(s)=s
\]
for every other directed edge \(e\).

Thus the final state is determined by which of \(e_A\) and \(e_B\) occurs
last. Therefore, we obtain
\[
D(w_{AB})=1,
\qquad
D(w_{BA})=0.
\]
\end{proof}}

We next show that no fixed finite-state bound recovers the whole reduced
loop word. For \(B\) as in \eqref{eq:AB-blocks}, let \(B^m\) denote its \(m\)-fold
concatenation, a closed walk of length \(4m\).
\begin{theorem}\label{thm:window}
Let \(k\geq1\).  For every \(k\)-state reading \(F\),
$$
F\bigl(B^{k!}\bigr)=F\bigl(B^{2\cdot k!}\bigr),
$$
while
$$
\rho\bigl(B^{k!}\bigr)\neq\rho\bigl(B^{2\cdot k!}\bigr).
$$
\end{theorem}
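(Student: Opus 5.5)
The plan is to use the fact that a $k$-state reading acts through a single self-map of its state set when it reads $B$. Let $F$ be a $k$-state reading with state set $S$, $|S|\le k$, initial state $s_0$ and output map $q$. We put
\[
T_B=T_{e_4}\circ T_{e_3}\circ T_{e_2}\circ T_{e_1}:S\to S,
\]
where $e_1,\dots,e_4$ are the directed edges of $B$. Then $F(B^m)=q\bigl(T_B^m(s_0)\bigr)$. It therefore suffices to prove that $T_B^{k!}(s_0)=T_B^{2\cdot k!}(s_0)$. This is a standard pigeonhole fact about the orbit of a point under one map on a finite set.

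The key step is the analysis of the orbit $s_0,T_B(s_0),T_B^2(s_0),\dots$. Since it has at most $k$ distinct values, there are an index $i$ and a period $p$ with $i+p\le k$, and hence $i\le k-1$ and $1\le p\le k$, such that $T_B^{n}(s_0)=T_B^{n+p}(s_0)$ for every $n\ge i$. This is the index--period structure of a monogenic action, as in the classification recalled from~\cite{delta-paper}. Two facts then finish the argument. First, $k!\ge k>i$, so the exponent $k!$ lies in the periodic range. Second, $p\le k$ gives $p\mid k!$. Iterating the period relation $k!/p$ times starting at $n=k!$ yields $T_B^{k!}(s_0)=T_B^{2\cdot k!}(s_0)$. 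Applying $q$ gives $F(B^{k!})=F(B^{2\cdot k!})$. The only care needed is to state the inequalities $i<k!$ and $p\mid k!$ correctly for small $k$. For $k=1$ the claim is trivial, and in general $i\le k-1<k\le k!$.

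For the second assertion I use the proof of Theorem~\ref{thm:order}, which gives $\rho(B)=\gen{1}\gen{5}^{-1}\gen{2}^{-1}$. Since $\rho$ is a homomorphism, $\rho(B^m)=\rho(B)^m$. Applying $a=\ab\circ\rho$ gives $a(B^m)=m\,(1,-1,0,0,-1)$ in the coordinates of $\gen{1},\gen{2},\gen{5}$. This is nonzero and differs for $m=k!$ and $m=2\cdot k!$. Hence $\rho(B^{k!})\neq\rho(B^{2\cdot k!})$. Alternatively, the cyclically reduced word $\gen{1}\gen{5}^{-1}\gen{2}^{-1}$ has powers of distinct lengths $3m$, which gives the same conclusion. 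There is no real obstacle beyond writing the orbit argument cleanly.
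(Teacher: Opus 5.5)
Your proof is correct and follows the paper's argument. The paper likewise reduces to the single map \(t\) induced by one copy of \(B\), applies pigeonhole to \(s_0,t(s_0),\ldots,t^k(s_0)\) to get a preperiod at most \(k\) and a period \(p\le k\) dividing \(k!\), and concludes \(t^{k!}(s_0)=t^{2\cdot k!}(s_0)\). For the second claim, the paper's proof uses your alternative (the cyclically reduced word \(\gen{1}\gen{5}^{-1}\gen{2}^{-1}\) has reduced powers), and your primary argument \(a(B^m)=m\,a(B)\neq 0\) is the observation the paper records in the remark right after the proof.
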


{\begin{proof}
Let \(S\) be the state set of \(F\), with \(|S|\leq k\), and let
\(
t:S\longrightarrow S
\)
be the state update obtained by reading one copy of \(B\). Starting from
the initial state \(s_0\), consider
\[
s_0,\ t(s_0),\ldots,t^k(s_0).
\]
Since \(|S|\leq k\), there exist \(0\leq i<j\leq k\) such that
\[
t^i(s_0)=t^j(s_0).
\]
Set \(p=j-i\). Then
\[
t^{n+p}(s_0)=t^n(s_0)
\qquad\text{for every }n\geq i.
\]
Since \(1\leq p\leq k\), we have \(p\mid k!\), and since \(i\leq k\leq k!\),
it follows that
\[
t^{k!}(s_0)=t^{2\cdot k!}(s_0).
\]
Applying the output map gives
\[
F\bigl(B^{k!}\bigr)=F\bigl(B^{2\cdot k!}\bigr).
\]

On the other hand, we have
\[
\rho(B)=\gen{1}\gen{5}^{-1}\gen{2}^{-1},
\]
which is a nontrivial cyclically reduced word. Hence
\[
\rho(B^m)=\rho(B)^m
\]
is reduced for every \(m\geq1\). Therefore
\[
\rho\bigl(B^{k!}\bigr)
\neq
\rho\bigl(B^{2\cdot k!}\bigr).
\]
\end{proof}}
This factorial witness is not purely noncommutative, since
\[
a(B)=(1,-1,0,0,-1)\neq0,
\qquad
a(B^m)=m\,a(B),
\]
so abelianization already separates the two powers.

\subsection{The stack recovers the reduced word}

 \begin{theorem}\label{thm:stack}
There is a sequential reading with an unbounded stack, whose state set is
the set of finite reduced words, which recovers
\(\rho(w)\) for every \(w\in\W\).
\end{theorem}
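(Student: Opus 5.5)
We construct the reading explicitly in the sense of Definition~\ref{def:clock}. Let \(S\) be the set of freely reduced words over \(\{\gen{1}^{\pm1},\ldots,\gen{5}^{\pm1}\}\), let \(s_0\) be the empty word, let \(Y=F_5\), and let \(q\) send a reduced word to the element of \(F_5\) it represents. For a directed edge \(e\), set \(T_e=\mathrm{id}_S\) if \(e\) is a tree edge. If \(e\) crosses the \(i\)-th cotree edge with sign \(\varepsilon\in\{\pm1\}\), let \(y=\gen{i}^{\varepsilon}\) and define \(T_e(s)\) by a stack operation. If \(s\) ends in \(y^{-1}\), then \(T_e(s)\) is \(s\) with its last letter removed (a pop). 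Otherwise \(T_e(s)=sy\) (a push). In either case \(T_e(s)\) is again reduced: a pop shortens a reduced word, and a push can only create a cancellation at the new junction, which we have excluded.

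The key step is an invariant proved by induction on the walk length \(n\). For \(w=e_1\cdots e_n\), the state \(T_{e_n}\circ\cdots\circ T_{e_1}(s_0)\) is the free reduction of the prefix word \(\ell(e_1\cdots e_n)\). The base case \(n=0\) is trivial. For the inductive step, write \(r\) for the reduced form of \(\ell(e_1\cdots e_{n-1})\). If \(e_n\) is a tree edge, then \(\ell\) does not change and neither does the state. Otherwise the letter \(y\) is appended, and the reduced form of \(ry\) is \(r\) with its last letter deleted if that letter is \(y^{-1}\), and \(ry\) otherwise. This holds because \(r\) is reduced, so the only possible cancellation is at the junction. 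This is exactly \(T_{e_n}(r)\).

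Applying the invariant to \(w\in\W\) gives final state equal to the reduced form of \(\ell(w)\), and \(q\) of that state is \(\rho(w)\) by the definition of \(\rho\) in Section~\ref{sec:carrier}. The state space is infinite, and it must be, by Theorem~\ref{thm:window}. There is no real obstacle here. The only point needing care is the one-letter cancellation claim, which uses that a single appended letter cancels against at most the last letter of a reduced word. The uniqueness of reduced forms is the standard fact that \(F_5\) is free on the cotree generators, which the paper has already cited.
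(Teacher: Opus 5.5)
Your proposal is correct and matches the paper's proof: the same push/pop stack on reduced words, with tree edges acting trivially, and an induction on initial segments showing that the state is the free reduction of the prefix's cotree word. You add a little detail the paper leaves implicit, namely why the updates preserve reducedness and why an appended letter can cancel only the last letter. Your side remark that the state set must be infinite correctly follows from Theorem~\ref{thm:window}.
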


\begin{proof}
We take the state set to be the set of finite reduced words in
\[
\gen{1}^{\pm1},\ldots,\gen{5}^{\pm1},
\]
with the empty word as the initial state and the last letter as the top of
the stack. The output map is the identity.

A tree edge leaves the state unchanged. If a cotree edge contributes a
letter \(l\), we delete the last letter when it is \(l^{-1}\), and
otherwise we append \(l\).

By induction on the initial segments of \(w\), after each step the stack
contains the freely reduced cotree word of the segment already read.
Therefore, after the whole walk is read, the stack contains
\(\rho(w)\).
\end{proof}

\begin{corollary}\label{cor:hierarchy}
There is a pair of walks which is not distinguished by any additive
reading, but is distinguished by a two-state reading. For every \(k\geq1\),
the walks
\[
B^{k!},\qquad B^{2\cdot k!}
\]
are not distinguished by any \(k\)-state reading, while the stack
distinguishes them. Hence no fixed finite-state bound recovers
\(\rho\) on all walks, whereas the stack recovers \(\rho(w)\) for every
\(w\in\W\).
\end{corollary}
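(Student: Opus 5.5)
The corollary is assembled directly from results already proved, so the plan is to cite them in order and check that each clause is covered.

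For the first sentence we take the pair \(w_{AB},w_{BA}\). By Theorem~\ref{thm:order} they have equal directed transition counts, \(\tau_{w_{AB}}=\tau_{w_{BA}}\). Proposition~\ref{thm:additive} then gives \(R_g(w_{AB})=R_g(w_{BA})\) for every abelian group \(G\) and every weight function \(g\). So no additive reading distinguishes them. Theorem~\ref{thm:twostate} supplies a two-state reading \(D\) with \(D(w_{AB})\neq D(w_{BA})\).

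For the second sentence, fix \(k\geq1\). Theorem~\ref{thm:window} shows that every \(k\)-state reading agrees on \(B^{k!}\) and \(B^{2\cdot k!}\), while \(\rho(B^{k!})\neq\rho(B^{2\cdot k!})\). Theorem~\ref{thm:stack} shows that the stack reading outputs \(\rho(w)\) on every \(w\in\W\). Hence the stack takes the distinct values \(\rho(B^{k!})\) and \(\rho(B^{2\cdot k!})\) on these two walks.

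For the final sentence, suppose some fixed \(k\) admitted a \(k\)-state reading \(F\) recovering \(\rho\) on all walks, that is, \(F(w)=F(w')\) implies \(\rho(w)=\rho(w')\). Applying this to \(B^{k!}\) and \(B^{2\cdot k!}\) contradicts Theorem~\ref{thm:window}. Together with Theorem~\ref{thm:stack}, this gives the dichotomy.

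There is no real obstacle here. The only point needing care is to fix the meaning of ``recovers'' as ``determines \(\rho(w)\) as a function of the output'', so that Theorem~\ref{thm:window} applies verbatim.
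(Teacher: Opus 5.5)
Your proposal is correct and matches the paper's proof: the first clause follows from Theorem~\ref{thm:order}, Proposition~\ref{thm:additive} and Theorem~\ref{thm:twostate}, and the second from Theorems~\ref{thm:window} and~\ref{thm:stack}. Reading ``recovers'' as ``\(\rho\) factors through the output'' simply states explicitly the step the paper leaves implicit.
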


\begin{proof}
The first statement follows from
Theorems~\ref{thm:order}, \ref{thm:twostate},  and Proposition~\ref{thm:additive}, and the second from
Theorems~\ref{thm:window} and~\ref{thm:stack}.
\end{proof}

\section{A proper order-sensitive congruence}\label{sec:congruence}

We compare the kernel congruences induced by the maps \(\rho\) and \(a=\ab\circ\rho\) of \eqref{11}, i.e.
\begin{equation}\label{ra}
\rho:\W\longrightarrow F_5
\qquad\text{and}\qquad
a:\W\longrightarrow\Z^5.
\end{equation}
Recall that \(a(w)=(a_1(w),\ldots,a_5(w))\) records the signed cotree
traversal counts.

\begin{theorem}\label{thm:proper}
We have
\[
\Delta_{\W}
\subsetneq
\sim_{\rho}
\subsetneq
\sim_{a}.
\]
\end{theorem}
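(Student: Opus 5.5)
The plan is to prove the two inclusions and then show each is strict with an explicit pair of walks.

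\emph{Inclusions.} The relation \(\Delta_{\W}\subseteq\sim_{\rho}\) holds because \(\rho\) is a function. The relation \(\sim_{\rho}\subseteq\sim_a\) holds because \(a=\ab\circ\rho\) by \eqref{11}: if \(\rho(u)=\rho(v)\), then \(a(u)=\ab(\rho(u))=\ab(\rho(v))=a(v)\). Both relations are kernel congruences of monoid homomorphisms, so they are congruences on \(\W\).

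\emph{Strictness of \(\sim_{\rho}\subsetneq\sim_a\).} I will use the pair of Theorem~\ref{thm:order}. Since \(\ab\) is a homomorphism into the commutative group \(\Z^5\),
\[
a(w_{AB})=a(A)+a(B)=a(w_{BA}),
\]
so \(w_{AB}\sim_a w_{BA}\). Theorem~\ref{thm:order} gives \(\rho(w_{AB})\neq\rho(w_{BA})\), so the two walks are not \(\sim_\rho\)-related. An alternative witness is the walk \(W\) of Theorem~\ref{thm:witness} together with the empty walk: \(a(W)=0=a(1)\), while \(\rho(W)\neq1\).

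\emph{Strictness of \(\Delta_{\W}\subsetneq\sim_{\rho}\).} Here I need two distinct walks with the same reduced loop word. The simplest choice is a backtrack. Take the empty walk \(1\) and the walk \((000;00)\), which goes along the tree edge \(000\to001\) and straight back. By Lemma~\ref{lem:closure} it is closed. Both of its steps are tree edges, so it contributes no cotree letter and \(\rho((000;00))=1=\rho(1)\). The two walks have different step sequences, so they are distinct elements of \(\W\). An alternative witness is \(AA^{-1}\), where \(A^{-1}=(000;1010)\) reverses \(A\): it is nonempty, while
\[
\rho(AA^{-1})=\gen1^{-1}\gen1=1.
\]

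There is no real obstacle. The only step that needs care is the transcription: I must check that the chosen backtrack uses only tree edges, or that its cotree letters cancel, under the paper's orientation convention. The edge \(000\)–\(001\) is the first edge of the Gray-code spanning tree, so this check is immediate.
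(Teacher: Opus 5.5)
Your proposal is correct and follows the paper's proof essentially verbatim: the inclusions come from \(a=\ab\circ\rho\), strictness of \(\sim_\rho\subsetneq\sim_a\) uses the pair \(w_{AB},w_{BA}\) of Theorem~\ref{thm:order}, and strictness of \(\Delta_{\W}\subsetneq\sim_\rho\) uses the same tree-edge backtrack \((000;00)\). Your alternative witnesses are also valid: \(W\) versus the empty walk, and \(AA^{-1}\) with \(A^{-1}=(000;1010)\), since \(\rho(A^{-1})=\gen{1}\).
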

 
{ \begin{proof}
Since \(a=\ab\circ\rho\), we have
\[
\rho(u)=\rho(v)\quad\Longrightarrow\quad a(u)=a(v),
\]
and therefore
\[
\sim_{\rho}\subseteq\sim_a.
\]
The inclusion is strict by Theorem~\ref{thm:order}, since
\[
\rho(w_{AB})\neq\rho(w_{BA}),
\]
while
\[
a(w_{AB})
=a(A)+a(B)
=a(B)+a(A)
=a(w_{BA}).
\]

For the first inclusion, let
\[
\beta=(000;00)
\]
be the two-step backtrack along a tree edge. Since both steps are tree
edges, we have
\[
\rho(\beta)=1=\rho(1),
\]
while \(\beta\neq1\) in \(\W\), since their step sequences are different.
Hence
\[
\Delta_{\W}\subsetneq\sim_{\rho}.
\]
\end{proof}}
Thus \(\sim_\rho\) is a proper congruence on \(\W\). By Theorem~\ref{thm:order},
it is order-sensitive: it separates \(w_{AB}\) from \(w_{BA}\), although
these walks have the same directed transition counts.

{\begin{corollary}\label{cor:quotients}
The recognition quotients determined by \(\rho\) and \(a\) given by \eqref{ra} satisfy
\[
\W/{\sim_\rho}\cong F_5,
\qquad
\W/{\sim_a}\cong\Z^5.
\]
\end{corollary}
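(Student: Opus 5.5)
The plan is to apply the first isomorphism theorem for monoids. For a monoid homomorphism \(r:M\to N\), the kernel congruence \(\sim_r\) gives \(M/{\sim_r}\cong r(M)\), with the class of \(x\) sent to \(r(x)\); this is well defined and injective by the definition of \(\sim_r\). It therefore suffices to show that \(\rho:\W\to F_5\) and \(a:\W\to\Z^5\) are surjective.

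For \(\rho\), I will use the standard spanning-tree argument. For each cotree edge \(x_i=(u_i,v_i)\), oriented from the smaller integer to the larger, let \(\gamma_u\) denote the unique tree path from \(o\) to \(u\). Put
\[
L_i=\gamma_{u_i}\,(u_i\to v_i)\,\gamma_{v_i}^{-1}.
\]
This is a based closed walk, so \(L_i\in\W\). Its only cotree crossing is the forward crossing of \(x_i\), hence \(\rho(L_i)=x_i\). For the inverse, let \(\bar L_i\) be the reverse walk \(\gamma_{v_i}(v_i\to u_i)\gamma_{u_i}^{-1}\); it is again closed, and \(\rho(\bar L_i)=x_i^{-1}\). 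Since \(\rho\) is a monoid homomorphism, products of the \(L_i\) and \(\bar L_i\) realize every word in \(x_i^{\pm1}\). Thus \(\rho(\W)\) contains a generating set of \(F_5\) together with all inverses, so it is all of \(F_5\).

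Surjectivity of \(a=\ab\circ\rho\) then follows at once, because \(\ab:F_5\to\Z^5\) is surjective. Alternatively, one can note directly that \(a(L_i)\) and \(a(\bar L_i)\) are the vectors \(\pm\) the standard basis vectors. Combining these with the isomorphism theorem gives \(\W/{\sim_\rho}\cong F_5\) and \(\W/{\sim_a}\cong\Z^5\). Both isomorphisms are monoid isomorphisms, and the targets are groups, so the quotients are groups.

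The main point needing care is that \(\W\) is only a monoid and has no inverses. One must therefore check that inverses in \(F_5\) are actually realized by walks, which is why the reversed loops \(\bar L_i\) are needed. It is also worth verifying for \(Q_3\) that the reversed walk stays within the cube graph; this holds because edges are undirected. No other obstacle is expected.
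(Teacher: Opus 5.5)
Your proposal is correct and follows essentially the same route as the paper. The paper also applies the first isomorphism theorem for monoids and proves surjectivity of \(\rho\) from the based fundamental loops of the cotree edges and their reverses, which map to \(\gen{i}\) and \(\gen{i}^{-1}\); surjectivity of \(a\) then comes from that of \(\ab\). Your explicit construction \(L_i=\gamma_{u_i}(u_i\to v_i)\gamma_{v_i}^{-1}\) simply spells out what the paper states in one line.
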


\begin{proof}
By the first isomorphism theorem for monoids,
\[
\W/{\sim_\rho}\cong \operatorname{im}\rho,
\qquad
\W/{\sim_a}\cong \operatorname{im}a.
\]
For each cotree edge, the corresponding based fundamental loop maps to
\(\gen{i}\), while its reverse maps to \(\gen{i}^{-1}\). Hence
\(\rho:\W\to F_5\) is surjective. Thus,
\[
\W/{\sim_\rho}\cong F_5.
\]
Since \(a=\ab\circ\rho\) and both \(\rho\) and \(\ab\) are surjective,
\(a:\W\to\Z^5\) is also surjective. Therefore
\[
\W/{\sim_a}\cong\Z^5.
\]
\end{proof}}


\begin{proposition}\label{thm:commblind}
Let \(M\) be a commutative monoid and let \(f:\W\to M\) be a homomorphism.
Then
\[
f(uv)=f(vu)
\]
for all \(u,v\in\W\). In particular,
\[
f(w_{AB})=f(w_{BA}).
\]
\end{proposition}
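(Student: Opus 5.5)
The proof is a direct computation using only the homomorphism property of \(f\) and commutativity of \(M\). For \(u,v\in\W\), the concatenations \(uv\) and \(vu\) are again elements of \(\W\), since \(\W\) is closed under concatenation. Because \(f\) is a monoid homomorphism,
\[
f(uv)=f(u)f(v)
\qquad\text{and}\qquad
f(vu)=f(v)f(u).
\]
Commutativity of \(M\) gives \(f(u)f(v)=f(v)f(u)\), and therefore \(f(uv)=f(vu)\).

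For the particular case, take \(u=A\) and \(v=B\) as in \eqref{eq:AB-blocks}. Both are based closed walks, so they lie in \(\W\). Since \(w_{AB}=AB\) and \(w_{BA}=BA\), the general identity yields \(f(w_{AB})=f(w_{BA})\).

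Nothing here is a real obstacle; the only point to check is that \(A\) and \(B\) belong to \(\W\), which holds by construction and Lemma~\ref{lem:closure}. It may be worth adding a remark after the proof. Combined with Theorem~\ref{thm:order}, the statement shows that no homomorphism into a commutative monoid can factor \(\rho\), and in particular none can be injective on the pair \(w_{AB},w_{BA}\). This parallels Proposition~\ref{thm:additive}, but here it holds for arbitrary commutative recognizers and not only for additive one-step readings.
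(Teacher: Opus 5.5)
Your proof is correct and is essentially the paper's argument: both use $f(uv)=f(u)f(v)=f(v)f(u)=f(vu)$ and then set $u=A$, $v=B$. Your closing remark, that $\rho$ cannot factor through a homomorphism into a commutative monoid, is also sound and matches the paper's comment after the proposition.
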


\begin{proof}
Since \(M\) is commutative, we have
\[
f(uv)=f(u)f(v)=f(v)f(u)=f(vu).
\]
Setting \(u=A\) and \(v=B\) we obtain $f(w_{AB})=f(w_{BA}).$
\end{proof}

Thus a monoid recognizer separating \(w_{AB}\) from \(w_{BA}\) cannot
take values in a commutative monoid. By Theorem~\ref{thm:order},
\(\rho\) is such a recognizer.

\begin{remark}\label{rem:tau}
The maps \(\rho\) and \(\tau\) are incomparable. By
Theorem~\ref{thm:order}, we have
\[
\tau_{w_{AB}}=\tau_{w_{BA}},
\qquad
\rho(w_{AB})\neq\rho(w_{BA}),
\]
so \(\tau\) does not determine \(\rho\). Conversely, for
\(\beta=(000;00)\), we have
\[
\rho(\beta)=\rho(1),
\qquad
\tau_\beta\neq\tau_1,
\]
so \(\rho\) does not determine \(\tau\).
\end{remark}

A second proper order-sensitive congruence is constructed in
Section~\ref{sec:settlement}.

\section{A declared quarter-turn clock}
\label{sec:settlement}

The reduced loop word \(\rho\) gives one order-sensitive recognizer.
We now define a second one by a declared noncommutative transport on the
three coordinate directions of the cube.

Let
\[
\mathbb H_{\Z}
=
\{a+b\mathbf i+c\mathbf j+d\mathbf k:
a,b,c,d\in\Z\}
\]
be the multiplicative monoid of integer quaternions. The quaternion units
satisfy
\[
\mathbf i^2=\mathbf j^2=\mathbf k^2=-1,
\qquad
\mathbf i\mathbf j=\mathbf k,\quad
\mathbf j\mathbf k=\mathbf i,\quad
\mathbf k\mathbf i=\mathbf j.
\]
We assign to the three coordinate directions
\[
q_0=1+\mathbf i,\qquad
q_1=1+\mathbf j,\qquad
q_2=1+\mathbf k.
\]

The normalized elements
\[
\frac{q_0}{\sqrt2},\qquad
\frac{q_1}{\sqrt2},\qquad
\frac{q_2}{\sqrt2}
\]
are unit quaternions representing quarter-turns about the three
coordinate axes. 

For a walk
\[
w=(o;a_1\cdots a_n),
\]
define
\[
h:\W\longrightarrow\mathbb H_{\Z},
\qquad
h(w)=q_{a_1}\cdots q_{a_n}.
\] The map \(h\) uses the integer elements \(q_a\), not  unit
quaternions.
 
Let
\[
\mathcal T=\{\tau_w:w\in\W\}\subseteq\N^{V\times V},
\]
with pointwise addition. Since \(\tau_{uv}=\tau_u+\tau_v\) and
\(\tau_1=0\), the set \(\mathcal T\) is a submonoid of
\(\N^{V\times V}\).

\begin{theorem}\label{thm:quarter-clock}
The map
\[
c:\W\longrightarrow\mathcal T\times\mathbb H_{\Z},
\qquad
c(w)=(\tau_w,h(w)),
\]
is a monoid homomorphism and
\[
\Delta_{\W}\subsetneq\sim_c\subsetneq\sim_\tau.
\]
\end{theorem}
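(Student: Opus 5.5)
The plan is to check the homomorphism property directly from the definitions, then get both inclusions and their strictness by explicit quaternion computations on the square loops \(A\), \(B\) of \eqref{eq:AB-blocks} and one further square loop.

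\emph{Homomorphism.} The target \(\mathcal T\times\mathbb H_{\Z}\) is a product monoid: componentwise addition in \(\mathcal T\) and multiplication in \(\mathbb H_{\Z}\). We already have \(\tau_{uv}=\tau_u+\tau_v\) and \(\tau_1=0\). The map \(h\) is a product over the step sequence, so \(h(uv)=h(u)h(v)\), and \(h(1)=1\) as the empty product. Hence \(c\) is a monoid homomorphism.

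\emph{The inclusion \(\sim_c\subseteq\sim_\tau\).} This is immediate, since \(\tau\) is the first coordinate of \(c\).

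\emph{Strictness of \(\sim_c\subsetneq\sim_\tau\).} I need two walks with equal \(\tau\) and different \(h\), and I will use the reversal trick of Theorem~\ref{thm:order}. Note that \(A\) and \(B\) are the wrong pair. We have
\[
q_0q_1=1+\mathbf i+\mathbf j+\mathbf k=q_1q_2 .
\]
For \(u=\mathbf i+\mathbf j+\mathbf k\) one has \(u^2=-3\), so
\[
h(A)=h(B)=-2+2(\mathbf i+\mathbf j+\mathbf k).
\]
Thus \(h\) does not separate \(w_{AB}\) from \(w_{BA}\).

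Instead take \(C=(000;0202)\), the square in coordinates \(0\) and \(2\). Here
\[
q_0q_2=1+\mathbf i-\mathbf j+\mathbf k ,
\]
and squaring in the same way gives
\[
h(C)=-2+2(\mathbf i-\mathbf j+\mathbf k).
\]
Additivity of \(\tau\) gives \(\tau_{AC}=\tau_{CA}\). For pure quaternions \(x,y\), the product \(xy\) has vector part \(x\times y\), so
\[
h(A)h(C)-h(C)h(A)=8\,(x\times y),\qquad x=\mathbf i+\mathbf j+\mathbf k,\ \ y=\mathbf i-\mathbf j+\mathbf k .
\]
The cross product is \((1,1,1)\times(1,-1,1)=(2,0,-2)\neq0\), so \(h(AC)\neq h(CA)\). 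This gives \(AC\sim_\tau CA\) but not \(AC\sim_c CA\).

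\emph{Strictness of \(\Delta_\W\subsetneq\sim_c\).} The computation \(h(A)=h(B)\) now helps. The walks \(w_{AB}\) and \(w_{BA}\) have equal \(\tau\) by Theorem~\ref{thm:order}. They have equal \(h\), since
\[
h(w_{AB})=h(A)h(B)=h(A)^2=h(B)h(A)=h(w_{BA}).
\]
Yet they are distinct as step sequences, \(01011212\neq12120101\). Hence \(c(w_{AB})=c(w_{BA})\) with \(w_{AB}\neq w_{BA}\).

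The main obstacle is choosing witnesses: the obvious pair \(AB,BA\) fails for the upper strictness because of the coincidence \(q_0q_1=q_1q_2\). The fix is to use \(C\) there and to turn the same coincidence into the witness for the lower strictness. The remaining work is routine checking of the quaternion products.
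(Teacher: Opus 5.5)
Your proof is correct. Its skeleton matches the paper's: a direct homomorphism check, \(\sim_c\subseteq\sim_\tau\) from the first coordinate, and explicit witnesses for both strict inclusions. What differs is the choice of witnesses.

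For \(\sim_c\subsetneq\sim_\tau\), the paper uses the backtrack concatenations \(u=(000;001122)\) and \(v=(000;002211)\). There \((1+\mathbf i)^2=2\mathbf i\) and its analogues give the real values \(h(u)=8\mathbf i\mathbf j\mathbf k=-8\) and \(h(v)=8\mathbf i\mathbf k\mathbf j=8\), so no cross-product argument is needed. Your pair \(AC,CA\) also works; indeed \(h(AC)=-16\mathbf k\) and \(h(CA)=-16\mathbf i\). It has the merit of showing that \(h\) separates two reorderings of genuine face loops, not just of backtracks.

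For \(\Delta_{\W}\subsetneq\sim_c\), the paper uses the cyclic rotations \(r=(000;001001)\) and \(r'=(000;100100)\), with \(h(r)=h(r')=-8\). You instead use \(w_{AB},w_{BA}\) through the coincidence \(q_0q_1=q_1q_2\). That is exactly the pair the paper invokes afterwards in Proposition~\ref{prop:clock-incomparable} to show \(\sim_c\not\subseteq\sim_\rho\).

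So the trade-off runs in both directions. Your lower witness proves more than needed, since it already yields one half of the incomparability of \(\sim_c\) and \(\sim_\rho\). The paper's upper witness does double duty in the other direction, because \(\rho(u)=\rho(v)=1\) gives \(\sim_\rho\not\subseteq\sim_c\). Your \(AC,CA\) cannot serve there, since \(\rho(AC)=\gen{1}^{-1}\gen{3}\gen{2}^{-1}\neq\gen{3}\gen{2}^{-1}\gen{1}^{-1}=\rho(CA)\).
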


\begin{proof}
For \(u,v\in\W\), we have
\[
h(uv)=h(u)h(v),
\qquad
\tau_{uv}=\tau_u+\tau_v.
\]
Hence \(c\) is a monoid homomorphism. Since the first component of \(c\)
is \(\tau\), we have
\[
\sim_c\subseteq\sim_\tau.
\]

To show that this inclusion is strict, let us use
\[
u=(000;001122),
\qquad
v=(000;002211).
\]
The two walks have the same directed transition counts, so
\(
\tau_u=\tau_v.
\)
We have
\[
\begin{aligned}
h(u)
&=(1+\mathbf i)^2(1+\mathbf j)^2(1+\mathbf k)^2
 =8\mathbf i\mathbf j\mathbf k=-8,\\
h(v)
&=(1+\mathbf i)^2(1+\mathbf k)^2(1+\mathbf j)^2
 =8\mathbf i\mathbf k\mathbf j=8.
\end{aligned}
\]
Thus \(c(u)\neq c(v)\), and therefore
\[
\sim_c\subsetneq\sim_\tau.
\]

For the first inclusion, take
\[
r=(000;001001),
\qquad
r'=(000;100100).
\]
These are distinct based closed walks. Their directed edge sequences are
cyclic rotations, hence
\[
\tau_r=\tau_{r'}.
\]
Also,
\[
\begin{aligned}
h(r)
&=\bigl((1+\mathbf i)^2(1+\mathbf j)\bigr)^2
 =\bigl(2\mathbf i(1+\mathbf j)\bigr)^2
 =\bigl(2\mathbf i+2\mathbf k\bigr)^2
 =-8,\\
h(r')
&=\bigl((1+\mathbf j)(1+\mathbf i)^2\bigr)^2
 =\bigl((1+\mathbf j)2\mathbf i\bigr)^2
 =\bigl(2\mathbf i-2\mathbf k\bigr)^2
 =-8.
\end{aligned}
\]
Hence
\(
c(r)=c(r'),
\)
while \(r\neq r'\). Therefore, we get
\(
\Delta_{\W}\subsetneq\sim_c.
\)
\end{proof}
 
Thus \(\sim_c\) is a proper congruence on \(\W\). By
Theorem~\ref{thm:quarter-clock}, it is order-sensitive: it separates
\(u\) from \(v\), although these walks have the same directed transition
counts.


\begin{proposition}\label{prop:clock-incomparable}
The congruences \(\sim_c\) and \(\sim_\rho\) are incomparable.
\end{proposition}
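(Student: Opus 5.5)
The plan is to exhibit two pairs of walks: one pair identified by \(\sim_\rho\) but separated by \(\sim_c\), and one pair identified by \(\sim_c\) but separated by \(\sim_\rho\). Then neither congruence contains the other.

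\emph{First direction.} I would reuse the backtrack \(\beta=(000;00)\) from the proof of Theorem~\ref{thm:proper}. We have \(\rho(\beta)=1=\rho(1)\), so \(\beta\sim_\rho 1\). Remark~\ref{rem:tau} gives \(\tau_\beta\neq\tau_1=0\). Since the first component of \(c\) is \(\tau\), this yields \(c(\beta)\neq c(1)\), hence \(\sim_\rho\not\subseteq\sim_c\).

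\emph{Second direction.} Here the idea is to build the pair out of a block whose quaternion value is central. If \(X,Y\in\W\) and \(h(X)\in\Z\), then \(h(XY)=h(X)h(Y)=h(Y)h(X)=h(YX)\). Also \(\tau_{XY}=\tau_X+\tau_Y=\tau_{YX}\). Hence \(c(XY)=c(YX)\) for every such \(X,Y\).

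To find a central value, take \(X=A^3\) with \(A=(000;0101)\). Then \(h(A)=\bigl((1+\mathbf i)(1+\mathbf j)\bigr)^2=q^2\), where \(q=1+\mathbf i+\mathbf j+\mathbf k\). The element \(q/2\) is a unit quaternion with real part \(1/2\), so it has order \(6\), with \((q/2)^3=-1\). Hence \(h(A^3)=q^6=(-8)^2=64\), which is real and therefore central. (Checking \(q^3=-8\) directly is a short computation: \(q^2=2(-1+\mathbf i+\mathbf j+\mathbf k)\), and then \(q\cdot q^2=-8\).)

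Now take \(Y=B=(000;1212)\) and compare \(u=A^3B\) with \(v=BA^3\). These satisfy \(c(u)=c(v)\) by the observation above. On the other side, \(\rho(A)=\gen{1}^{-1}\) and \(\rho(B)=\gen{1}\gen{5}^{-1}\gen{2}^{-1}\) by Theorem~\ref{thm:order}. This gives
\[
\rho(u)=\gen{1}^{-2}\gen{5}^{-1}\gen{2}^{-1},\qquad
\rho(v)=\gen{1}\gen{5}^{-1}\gen{2}^{-1}\gen{1}^{-3}.
\]
Both words are freely reduced and they are distinct, so \(\sim_c\not\subseteq\sim_\rho\).

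The main obstacle is the quaternion arithmetic: confirming that some power of \(h(A)\) is real, and pinning down its exact value. If the order computation for \(q/2\) went wrong, I would fall back on a general argument instead. The element \(h(A)\) has norm \(16\). Its normalization \(h(A)/4\) lies in the finite Hurwitz unit group of order \(24\), so some power \(h(A)^m\) with \(m\mid 12\) is real. Then \(X=A^m\) works equally well, since \(\rho(A^m)=\gen{1}^{-m}\) still fails to commute with \(\rho(B)\).
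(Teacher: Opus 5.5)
Your proof is correct, but it uses different witness pairs from the paper in both directions.

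\emph{First direction.} For \(\sim_\rho\not\subseteq\sim_c\) you compare \(\beta\) with the empty walk. These two walks are separated by the \(\tau\)-component alone, so your argument is Remark~\ref{rem:tau} combined with \(\sim_c\subseteq\sim_\tau\). The paper instead uses \(u=(000;001122)\) and \(v=(000;002211)\) from Theorem~\ref{thm:quarter-clock}. Both are products of backtracks, so \(\rho(u)=\rho(v)=1\), and they have equal transition counts. They are separated by \(h(u)=-8\neq 8=h(v)\). That witness says more: the quaternion transport itself, not just the \(\tau\) bookkeeping, records information that \(\rho\) discards.

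\emph{Second direction.} For \(\sim_c\not\subseteq\sim_\rho\) the paper notes that \((1+\mathbf i)(1+\mathbf j)=(1+\mathbf j)(1+\mathbf k)=1+\mathbf i+\mathbf j+\mathbf k\). Hence \(h(A)=h(B)\), and the headline pair \(w_{AB},w_{BA}\) of Theorem~\ref{thm:order} is already \(\sim_c\)-equivalent. You already had \(h(A)=q^2\). Computing \(h(B)\) would also have given \(q^2\), so the detour through \(q^3=-8\) and the pair \(A^3B,\ BA^3\) is not needed.

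Your centrality mechanism does buy generality. It gives \(A^3Y\sim_c YA^3\) for every \(Y\in\W\). Moreover, the normalized factors \(q_a/\sqrt2\) lie in a finite group of unit quaternions (the binary octahedral group). So every block \(X\) has a power \(X^m\) with real \(h\)-value, and then \(X^mY\sim_c YX^m\) for all \(Y\). This shows the collapse of order information under \(c\) is systematic, not an accident of the particular squares \(A\) and \(B\). The paper's route is shorter and ties the result directly to its canonical order pair.
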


\begin{proof}
Let \(u,v\) be the walks from Theorem~\ref{thm:quarter-clock}.
Both are concatenations of three two-step backtracks from the origin.
Hence
\[
\rho(u)=\rho(v)=1,
\]
while
\(
c(u)\neq c(v).
\)
Therefore, we have
\(
\sim_\rho\not\subseteq\sim_c.
\)

For the converse, we use the walks \(A\) and \(B\) from
\eqref{eq:AB-blocks}. By Theorem~\ref{thm:order}, we have
\[
\tau_{AB}=\tau_{BA},
\qquad
\rho(AB)\neq\rho(BA).
\]
Also,
\[
(1+\mathbf i)(1+\mathbf j)
=
(1+\mathbf j)(1+\mathbf k)
=
1+\mathbf i+\mathbf j+\mathbf k.
\]
Hence
\[
h(A)
=
\bigl((1+\mathbf i)(1+\mathbf j)\bigr)^2
=
\bigl((1+\mathbf j)(1+\mathbf k)\bigr)^2
=
h(B).
\]
Therefore
\[
h(AB)=h(A)h(B)=h(B)h(A)=h(BA).
\]
Together with \(\tau_{AB}=\tau_{BA}\), we obtain
\[
c(AB)=c(BA).
\]
Since
\[
\rho(AB)\neq\rho(BA),
\]
it follows that
\(
\sim_c\not\subseteq\sim_\rho.
\)
\end{proof}


The map \(h\) is not determined by the graph \(Q_3\). It depends on the three coordinate directions and on the choice of the quaternion units \(\mathbf i,\mathbf j,\mathbf k\) for these directions. A different choice can give a different homomorphism. Thus \(h\) is an additional structure on \(\W\).

 \section{Rectangular constraints on histories}\label{sec:boundary}

In the previous sections, we studied readings of a history \(w\in\W\).
We now consider constraints on pairs
\(
(\alpha,w)\in\A\times\W.
\)
We determine when such constraints are rectangular and when they are not.

Let
\(
\U=\A\times\W
\)
be the product carrier. A constraint \(P\subseteq\U\) is called
\emph{rectangular} if there exist subsets
\(P_{\A}\subseteq\A\) and \(P_{\W}\subseteq\W\) such that
\[
(\alpha,w)\in P
\quad\Longleftrightarrow\quad
\alpha\in P_{\A}\ \text{and}\ w\in P_{\W}.
\]
Equivalently,
\[
P=P_{\A}\times P_{\W}.
\]

 We first work on the full group \(\A=\Z^V\). Write
\[
\A_0=
\bigl\{\alpha\in\A:\textstyle\sum_{p\in V}\alpha(p)=0\bigr\}
\]
for the kernel of the sum map
\[
\alpha\longmapsto\sum_{p\in V}\alpha(p).
\]

\medskip
 Since every \(w\in\W\) is closed, Lemma~\ref{thm:telescope} gives
\(R_\phi(w)=0\) for every potential reading. Hence every constraint
depending only on potential readings is constant on \(\U\), and consequently
rectangular.
\medskip

For \(w=(v_0,\ldots,v_n)\in\W\), define
\(
\occ:\W\longrightarrow\A
\)
by
\[
\occ(w)(p)
=
\#\{k:0\leq k<n,\ v_k=p\}.
\]
Thus the closing vertex \(v_n=v_0\) is not counted. Define
\(\mathcal O\subseteq\U\) by
\[
(\alpha,w)\in\mathcal O
\quad\Longleftrightarrow\quad
\alpha=\occ(w).
\]
We call \(\mathcal O\) the \emph{occupation-binding constraint}.

\begin{proposition}\label{thm:occupation}
The occupation-binding constraint \(\mathcal O\) is not rectangular on
\(\U\).
\end{proposition}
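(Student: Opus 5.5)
The plan is to argue by contradiction from the standard characterization of rectangles: a set \(P\subseteq\U\) is rectangular if and only if, whenever \((\alpha,w)\in P\) and \((\alpha',w')\in P\), also \((\alpha,w')\in P\) and \((\alpha',w)\in P\). Only the easy direction is needed. If \(P=P_{\A}\times P_{\W}\), then \(\alpha,\alpha'\in P_{\A}\) and \(w,w'\in P_{\W}\), so the mixed pairs lie in \(P\). So it suffices to exhibit two pairs in \(\mathcal O\) whose mixed pair is not in \(\mathcal O\).

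For this I take two walks with different occupation vectors. The simplest choice is the empty walk \(1\) and the backtrack \(\beta=(000;00)\) from the proof of Theorem~\ref{thm:proper}. We have \(\occ(1)=0\). The walk \(\beta\) has vertex sequence \(0,1,0\), so \(\occ(\beta)\) takes the value \(1\) at \(000\) and at \(001\), and \(0\) elsewhere. Thus
\[
(\occ(1),1)\in\mathcal O,\qquad (\occ(\beta),\beta)\in\mathcal O .
\]

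If \(\mathcal O\) were rectangular, then \((\occ(1),\beta)\in\mathcal O\), which would give \(\occ(\beta)=\occ(1)=0\). This is a contradiction, since \(\occ(\beta)\) has value \(1\) at \(000\). Hence \(\mathcal O\) is not rectangular.

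The only point needing care is the bookkeeping in the definition of \(\occ\): the closing vertex is excluded, so the count runs over \(0\leq k<n\). For \(\beta\) this gives \(n=2\), with \(v_0=000\) and \(v_1=001\) counted, so \(\occ(\beta)\neq0\). There is no real obstacle. The same argument works for any two walks with distinct occupation vectors, which anticipates the general criterion announced in the abstract.
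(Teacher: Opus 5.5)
Your proof is correct and is essentially the paper's argument. The paper also pairs the empty walk with a nonempty walk; it uses the square \((o;0101)\) where you use the backtrack \(\beta=(000;00)\), and it derives the contradiction from the cross pair \((\occ(1),v)\) via the easy direction of the rectangle characterization. Your choice of \(\beta\) instead of the square is immaterial, since any two walks with distinct occupation vectors work, as the paper later exploits in Theorem~\ref{thm:criterion}.
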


\begin{proof}
Choose \(u,v\in\W\) with
\[
\occ(u)\neq\occ(v),
\]
for example the empty walk \(u=1\) and the square
\(v=(o;0101)\). Then
\[
(\occ(u),u)\in\mathcal O,
\qquad
(\occ(v),v)\in\mathcal O,
\]
while
\[
(\occ(u),v)\notin\mathcal O.
\]

Suppose that \(\mathcal O\) is rectangular. Then
\[
\mathcal O=P_{\A}\times P_{\W}
\]
for some \(P_{\A}\subseteq\A\) and \(P_{\W}\subseteq\W\).
From \((\occ(u),u)\in\mathcal O\) we obtain
\(\occ(u)\in P_{\A}\), and from
\((\occ(v),v)\in\mathcal O\) we obtain \(v\in P_{\W}\).
Hence
\[
(\occ(u),v)\in\mathcal O,
\]
which is a contradiction.
\end{proof}

\begin{proposition}\label{prop:neutral-occupation}
For \((\alpha,w)\in\A_0\times\W\),
\[
(\alpha,w)\in\mathcal O
\quad\Longleftrightarrow\quad
\alpha=0\ \text{and}\ w=1.
\]
\end{proposition}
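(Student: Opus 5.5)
The plan is a direct computation of the total mass of \(\occ(w)\), followed by the two directions of the equivalence.

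\emph{Total mass.} For \(w=(v_0,\ldots,v_n)\in\W\) I would first compute
\[
\sum_{p\in V}\occ(w)(p)=\#\{k:0\leq k<n\}=n=|w|.
\]
This holds because each index \(k\) with \(0\leq k<n\) is counted exactly once, namely at the vertex \(v_k\). So the sum map sends \(\occ(w)\) to the length of \(w\).

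\emph{Forward direction.} Suppose \((\alpha,w)\in\mathcal O\) with \(\alpha\in\A_0\). Then \(\alpha=\occ(w)\), so
\[
0=\sum_{p\in V}\alpha(p)=|w|.
\]
Hence \(w\) has no steps. Since two walks are equal exactly when their step sequences coincide, \(w=1\). Then \(\alpha=\occ(1)\), which is the zero function, because the index set \(\{k:0\leq k<0\}\) is empty.

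\emph{Converse.} Here I only need to check that \((0,1)\in\A_0\times\W\) and that \(0=\occ(1)\). Both are immediate from the computation just made.

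I expect no real obstacle. The only point needing care is the convention that the closing vertex \(v_n\) is not counted. That convention is what makes the total mass equal \(n\) rather than \(n+1\), and it is also what makes \(\occ(1)=0\) instead of the indicator function of \(o\).
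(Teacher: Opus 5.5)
Your proposal is correct and matches the paper's proof: the identity \(\sum_{p\in V}\occ(w)(p)=|w|\) forces \(|w|=0\) when \(\alpha=\occ(w)\in\A_0\), so \(w=1\) and \(\alpha=\occ(1)=0\), and the converse is immediate. Your point about not counting the closing vertex is exactly what makes \(\occ(1)=0\) and the total mass equal to \(|w|\).
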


\begin{proof}
For every \(w\in\W\), it holds
\[
\sum_{p\in V}\occ(w)(p)=|w|.
\]
If \((\alpha,w)\in\mathcal O\) and \(\alpha\in\A_0\), then
\(\alpha=\occ(w)\), so
\[
|w|
=
\sum_{p\in V}\occ(w)(p)
=
\sum_{p\in V}\alpha(p)
=
0.
\]
Hence \(w=1\) and \(\alpha=0\). The converse is immediate.
\end{proof}

\begin{remark}
Thus
\[
\mathcal O\cap(\A_0\times\W)=\{0\}\times\{1\}.
\]
Hence \(\mathcal O\) is rectangular on \(\A_0\times\W\); in this case
the restriction consists of a single point.
\end{remark}

\subsection{A criterion for rectangularity}

A constraint \(P\subseteq\U\) is called \emph{occupation-based} if
\[
(\alpha,w)\in P
\quad\Longleftrightarrow\quad
\Psi(\alpha,\occ(w))
\]
for some predicate
\[
\Psi:\A\times\A\longrightarrow
\{\mathrm{false},\mathrm{true}\}.
\]
Let \(\mathcal S\subseteq\W\) be an arbitrary subset.

Here a subset \(P\subseteq\U\) is called rectangular on
\(\A\times\mathcal S\) if
\[
P\cap(\A\times\mathcal S)=P_{\A}\times P_{\mathcal S}
\]
for some \(P_{\A}\subseteq\A\) and \(P_{\mathcal S}\subseteq\mathcal S\).
\begin{theorem} 
\label{thm:criterion}
The following statements are equivalent:
\begin{enumerate}[label=(\roman*)]
\item There is \(c_0\in\A\) such that
$
\occ(w)=c_0
$ for every \(w\in\mathcal S\).
\item Every occupation-based constraint is rectangular on
\(\A\times\mathcal S\).
\end{enumerate}
\end{theorem}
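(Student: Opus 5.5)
We prove the two implications separately. The direction (i)\(\Rightarrow\)(ii) is a direct computation. The direction (ii)\(\Rightarrow\)(i) goes by contraposition, reusing the occupation-binding constraint \(\mathcal O\) and the argument of Proposition~\ref{thm:occupation}.

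For (i)\(\Rightarrow\)(ii), assume \(\occ(w)=c_0\) for every \(w\in\mathcal S\), and let \(P\) be occupation-based with predicate \(\Psi\). For \(w\in\mathcal S\) we have
\[
(\alpha,w)\in P\iff\Psi(\alpha,c_0).
\]
Put \(P_{\A}=\{\alpha\in\A:\Psi(\alpha,c_0)\}\) and \(P_{\mathcal S}=\mathcal S\). Then
\[
P\cap(\A\times\mathcal S)=P_{\A}\times P_{\mathcal S}.
\]
This covers the degenerate cases as well. If \(\mathcal S=\emptyset\), condition (i) holds vacuously and both sides of the identity are empty. If \(P_{\A}=\emptyset\), the intersection is empty and again equals \(P_{\A}\times P_{\mathcal S}\).

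For (ii)\(\Rightarrow\)(i), suppose (i) fails. Then \(\mathcal S\neq\emptyset\), and there are \(u,v\in\mathcal S\) with \(\occ(u)\neq\occ(v)\). Take the occupation-binding constraint \(\mathcal O\), which is occupation-based with predicate \(\Psi(\alpha,\beta)\iff\alpha=\beta\). Assume for contradiction that
\[
\mathcal O\cap(\A\times\mathcal S)=P_{\A}\times P_{\mathcal S}.
\]
Since \((\occ(u),u)\) lies in the intersection, \(\occ(u)\in P_{\A}\). Since \((\occ(v),v)\) lies in the intersection, \(v\in P_{\mathcal S}\). Hence \((\occ(u),v)\) lies in the intersection, which forces \(\occ(u)=\occ(v)\), a contradiction. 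So \(\mathcal O\) is not rectangular on \(\A\times\mathcal S\), and (ii) fails.

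No step is a real obstacle. The only point needing care is the boundary case \(\mathcal S=\emptyset\) or \(|\mathcal S|=1\), where (i) holds trivially. The argument handles it because the counterexample in the second direction needs two elements of \(\mathcal S\) with distinct occupation vectors, and these exist exactly when (i) fails.
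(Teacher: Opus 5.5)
Your proposal is correct and follows the paper's proof essentially verbatim. The forward direction takes $P_{\A}=\{\alpha\in\A:\Psi(\alpha,c_0)\}$ and $P_{\mathcal S}=\mathcal S$, and the converse applies the cross-pair argument of Proposition~\ref{thm:occupation} to the occupation-binding constraint $\mathcal O$; you write that argument out rather than cite it, and you add correct but inessential remarks on the degenerate cases.
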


\begin{proof}
First, we assume (i). If
\[
(\alpha,w)\in P
\quad\Longleftrightarrow\quad
\Psi(\alpha,\occ(w)),
\]
then on \(\A\times\mathcal S\), we have
\[
(\alpha,w)\in P
\quad\Longleftrightarrow\quad
\Psi(\alpha,c_0).
\]

{Hence
\[
P\cap(\A\times\mathcal S)
=
P_{\A}\times\mathcal S,
\qquad
P_{\A}
=
\{\alpha\in\A:\Psi(\alpha,c_0)\}.
\]}
\medskip

Conversely, suppose (i) is not valid. Then there exist \(u,v\in\mathcal S\)
with \(\occ(u)\neq\occ(v)\). The constraint \(\mathcal O\) is
occupation-based, and the cross-pair argument of
Proposition~\ref{thm:occupation} shows that
\(\mathcal O\cap(\A\times\mathcal S)\) is not rectangular in
\(\A\times\mathcal S\).\end{proof}

For instance, let \(\mathcal S\) be the class of based closed walks
\((v_0,\ldots,v_8)\) with \(v_0,\ldots,v_7\) pairwise distinct. The Gray
cycle shows \(\mathcal S\neq\varnothing\), and \(\occ(w)(p)=1\) for every \(w\in\mathcal S\) and every \(p\in V\), so every occupation-based constraint is rectangular
there.

{ \section{Higher-dimensional cubes}

The \(1\)-skeleton of \(Q_n\) has \(2^n\) vertices and \(n2^{n-1}\)
edges. Hence its cycle rank is
\[
r_n=n2^{n-1}-2^n+1=(n-2)2^{n-1}+1.
\]
As in the proof of Proposition~\ref{prop:firstcube}, the fundamental
group of a connected graph is free of rank equal to its cycle rank.
Hence
\[
\pi_1(Q_n)\cong F_{r_n}.
\]

For every \(n\geq3\), a coordinate copy of \(Q_3\) is contained in
\(Q_n\). We extend the Gray spanning tree of this subcube to a spanning
tree \(T_n\) of \(Q_n\). Since the Gray tree already spans the eight
vertices of the subcube, no other edge of the subcube belongs to
\(T_n\). Thus its five cotree edges remain cotree edges for \(T_n\). By  \(\rho_{T_n}\) we denote  the reduced loop word map of \(Q_n\)
determined by \(T_n\), defined as in Section~\ref{sec:carrier}.

For a closed walk in this subcube, the cotree word calculated with \(T_n\)
is obtained from its \(Q_3\) cotree word by the homomorphism
\[
\iota:F_5\longrightarrow F_{r_n}
\]
which maps the five generators to distinct members of a free basis of
\(F_{r_n}\). This homomorphism is injective, since a subset of a free
basis freely generates a free subgroup. In particular, \(w_{AB}\) and
\(w_{BA}\)  have different reduced loop words in \(Q_n\), while
their directed transition counts agree. Therefore commutative aggregation does not determine the reduced loop
word on \(Q_n\) for any \(n\geq3\).

There is also a retraction
\[
\pi:F_{r_n}\longrightarrow F_5
\]
which maps these five basis elements to the corresponding generators of
\(F_5\), and every remaining basis element to \(1\). Hence
\(
\pi\circ\iota=\operatorname{id}_{F_5}.
\)}

The minimum of Theorem~\ref{thm:fourteen-minimal} is also independent of
\(n\).

\begin{theorem}\label{thm:fourteen-n}
Let \(n\geq3\). Among based closed walks on \(Q_n\),
\[
\min\bigl\{
|w|:\rho_{T_n}(w)\in\gamma_2(F_{r_n})\setminus\gamma_3(F_{r_n})
\bigr\}=14.
\]
The walk \(W\) of \eqref{walk}, read in the coordinate copy of \(Q_3\)
fixed above, realizes this minimum.
\end{theorem}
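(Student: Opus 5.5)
The plan has two parts: an upper bound of $14$ that follows from injectivity of $\iota$, and a lower bound of $14$ obtained by rerunning the argument of Theorem~\ref{thm:fourteen-minimal} inside $Q_n$.

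\emph{Upper bound.} Read $W$ in the fixed coordinate copy of $Q_3$. Then $\rho_{T_n}(W)=\iota(\rho(W))$. The map $\iota$ sends $F_5$ isomorphically onto a free factor of $F_{r_n}$, and it is compatible with the lower central series:
\[
\iota(\gamma_k(F_5))\subseteq\gamma_k(F_{r_n}),\qquad \pi(\gamma_k(F_{r_n}))\subseteq\gamma_k(F_5).
\]
Together with $\pi\circ\iota=\operatorname{id}$, these give $\iota(g)\in\gamma_k(F_{r_n})$ if and only if $g\in\gamma_k(F_5)$. The inclusion $\iota(g)\in\gamma_k(F_{r_n})$ for $g\in\gamma_k(F_5)$ is immediate. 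Conversely, if $\iota(g)\in\gamma_k(F_{r_n})$, then $g=\pi\iota(g)\in\gamma_k(F_5)$. Theorem~\ref{thm:fourteen-minimal} gives $\rho(W)\in\gamma_2(F_5)\setminus\gamma_3(F_5)$, so $\rho_{T_n}(W)\in\gamma_2\setminus\gamma_3$. Hence the minimum is at most $14$.

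\emph{Lower bound, reduction to the edge subgraph.} Let $w$ be a based closed walk on $Q_n$ with $\rho_{T_n}(w)\in\gamma_2(F_{r_n})\setminus\gamma_3(F_{r_n})$. Let $H$ be the connected subgraph of the edges used by $w$. Since $\rho_{T_n}(w)\in\gamma_2$, its abelianization vanishes. Via the cotree basis of the cycle space of $Q_n$, as in Section~\ref{sec:carrier}, this means the oriented chain $z(w)$ is $0$.

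\emph{Cycle rank of $H$.} If $H$ has cycle rank at most $1$, then $\pi_1(H)\to H_1(H)$ is injective, because $\pi_1(H)$ is trivial or $\Z$. Since $z(w)=0$, the walk $w$ is trivial in $\pi_1(H)$, hence in $\pi_1(Q_n)$. Then $\rho_{T_n}(w)=1\in\gamma_3$, a contradiction. So $e-v+1\ge2$, that is, $e\ge v+1$.

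\emph{Excluding small graphs.} Since $H$ is bipartite, $v\le4$ forces $e\le v^2/4<v+1$, which is impossible. The case $v=5$, $e=6$ forces $H\cong K_{2,3}$, which Lemma~\ref{lem:nok23} excludes for every $Q_n$. Therefore $e\ge7$. Finally, $z(w)=0$ forces every edge of $H$ to be traversed in both directions, so $|w|\ge2e\ge14$.

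\emph{Main obstacle.} The step that needs care is the translation "$\rho_{T_n}(w)\in\gamma_2$ implies $z(w)=0$" on $Q_n$. I would handle it by repeating the fundamental-cycle basis argument of Section~\ref{sec:carrier} verbatim for the tree $T_n$. That argument shows the abelianized cotree word gives exactly the coordinates of $z(w)$ in the basis of fundamental cycles, and nothing in it used $n=3$. Everything else transfers directly, since Lemma~\ref{lem:nok23} was already proved for all $n$.
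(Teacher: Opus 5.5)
Your proposal is correct and follows the paper's proof essentially step for step. The upper bound comes from $\rho_{T_n}(W)=\iota(\rho(W))$, with the retraction $\pi$ keeping $\rho_{T_n}(W)$ out of $\gamma_3(F_{r_n})$, and the lower bound reruns the argument of Theorem~\ref{thm:fourteen-minimal}: $z(w)=0$ via the cotree basis for $T_n$, cycle rank of $H$ at least two, bipartite counting plus Lemma~\ref{lem:nok23} to get $e\geq7$, and finally $|w|\geq 2e$ (the step where $e\leq 6$ and $e\geq v+1$ force $(v,e)=(5,6)$ is left implicit but is the same case split the paper writes out).
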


\begin{proof}
For the upper bound, all steps of \(W\) use edges of the subcube, so
\[
\rho_{T_n}(W)=\iota(\rho(W)).
\]
By Theorem~\ref{thm:fourteen-minimal}, we have
\[
\rho(W)\in\gamma_2(F_5)\setminus\gamma_3(F_5).
\]
Since \(\iota\) is a homomorphism, we have
\(
\iota(\rho(W))\in\gamma_2(F_{r_n}).
\)
Suppose that \(\iota(\rho(W))\in\gamma_3(F_{r_n})\). Then applying \(\pi\)
gives
\(
\rho(W)\in\gamma_3(F_5),
\)
which is a contradiction. Hence
\[
\rho_{T_n}(W)\in\gamma_2(F_{r_n})\setminus\gamma_3(F_{r_n}),
\]
and the minimum is at most \(14\).

For the lower bound, let \(w\) be a based closed walk on \(Q_n\) with
\[
\rho_{T_n}(w)\in\gamma_2(F_{r_n})\setminus\gamma_3(F_{r_n}).
\]
Since \(\rho_{T_n}(w)\in\gamma_2(F_{r_n})\), all signed cotree traversal
counts of \(w\) vanish, so as in Section~\ref{sec:carrier}, applied to
\(T_n\), we have
\(
z(w)=0.
\)
Let \(H\) be the subgraph of \(Q_n\) formed by all edges which occur in
\(w\). Since \(w\) is a walk, \(H\) is connected.

The cycle rank of \(H\) is at least two. Indeed, for cycle rank zero or
one, the two arguments in the proof of
Theorem~\ref{thm:fourteen-minimal} use only \(z(w)=0\) and the fact that
\(\pi_1(H)\) is free of that rank. So, we have \(\rho_{T_n}(w)=1\), hence
\(\rho_{T_n}(w)\in\gamma_3(F_{r_n})\), which is a contradiction. Putting
\[
v=|V(H)|,
\qquad
e=|E(H)|,
\]
we  obtain \(e\geq v+1\).

The graph \(H\) is bipartite, since it is a subgraph of \(Q_n\). If
\(e\leq6\), then \(v\leq5\), and the count in the proof of
Theorem~\ref{thm:fourteen-minimal} implies \(v=5\), \(e=6\), so \(H\) is
\(K_{2,3}\). This is excluded by Lemma~\ref{lem:nok23}. Hence
\(
e\geq7.
\)

Finally, \(z(w)=0\) means that every edge of \(H\) is traversed the same
number of times in both directions, and every edge of \(H\) occurs in
\(w\). Therefore
\[
|w|\geq2e\geq14.
\]
\end{proof}

For \(n=3\) we have \(r_3=5\), and \(T_3\) is the Gray spanning tree of
Section~\ref{sec:carrier}. Thus Theorem~\ref{thm:fourteen-minimal} is
the case \(n=3\).

As in Remark~\ref{rem:tree-free}, the statement does not depend on the
choice of \(T_n\), since the terms of the lower central series are
characteristic.

\section{Conclusion}

The passage from one repeatable generator to several independent loops
introduces order information. Every additive one-step reading factors
through the directed transition counts. Therefore commutative
aggregation does not recover the reduced loop word. The reduced-word recognizer gives a proper
noncommutative recognition quotient
\[
\Delta_{\W}\subsetneq\sim_\rho\subsetneq\sim_a.
\]
Thus the free-group reading retains order information which is lost by
the abelian reading.

We also determined nontrivial degree-two commutator information in the
cube edge metric. We proved that
\[
\min\bigl\{
|w|:\rho(w)\in\gamma_2(F_5)\setminus\gamma_3(F_5)
\bigr\}=14.
\]
Thus the fourteen-step witness is minimal. Its abelianization is trivial,
while its degree-two commutator information is nonzero.

We also obtain a separation between finite and unbounded sequential
memory. A two-state reading separates one order pair. For every
\(k\geq1\), there are two walks which no \(k\)-state reading separates,
although their reduced loop words are different. No fixed finite-state
bound therefore recovers \(\rho\) on all histories, while an unbounded
stack recovers \(\rho(w)\) for every \(w\in\W\).

The declared quarter-turn quaternion clock gives another proper
order-sensitive congruence,
\[
\Delta_{\W}\subsetneq\sim_c\subsetneq\sim_\tau.
\]
The congruences \(\sim_c\) and \(\sim_\rho\) are incomparable. Thus
different noncommutative recognizers can retain different order
information. The quaternion map is an additional structure on \(\W\) and is not
determined by the graph \(Q_3\).

For constraints on \(\A\times\W\), potential readings are constant
on closed histories, while the occupation-binding constraint is not
rectangular on \(\A\times\W\). On \(\A_0\times\W\), its restriction is
\[
\{0\}\times\{1\}.
\]
More generally, for a class \(\mathcal S\subseteq\W\), every
occupation-based constraint is rectangular on
\(\A\times\mathcal S\) if and only if the occupation vector is constant
on \(\mathcal S\). Hence this condition characterizes when
occupation-based constraints are rectangular.

Finally, the obstruction to commutative recovery is not specific to
\(Q_3\). For every \(n\geq3\), a coordinate copy of \(Q_3\) in \(Q_n\)
gives closed walks with equal directed transition counts and different
reduced loop words. Thus the same obstruction to commutative recovery
occurs in every hypercube \(Q_n\), \(n\geq3\). The minimum is also independent of \(n\). On every \(Q_n\) with
\(n\geq3\), the shortest based closed walk with vanishing abelianization
and nontrivial degree-two commutator information has length \(14\)
(Theorem~\ref{thm:fourteen-n}).

\bigskip

\vspace{6pt}
\noindent {\bf Author Contributions:} {Conceptualization, J.W.; Methodology, J.W. and M.Z.; Validation, J.W. and M.Z.; Formal Analysis, M.Z. and J.W.; Investigation, J.W. and M.Z.; Resources,
J.W.; Writing---Original Draft Preparation, J.W.; Writing---Review and Editing, M.Z. and J.W.; Funding Acquisition, J.W. }

\begingroup
\raggedright

\endgroup

\end{document}